\DeclareMathOperator{\Ext}{Ext}
\DeclareMathOperator{\Hom}{Hom}
\DeclareMathOperator{\pd}{pd}
\DeclareMathOperator{\id}{id}
\DeclareMathOperator{\add}{add}
\DeclareMathOperator{\Gen}{Gen}
\DeclareMathOperator{\Cogen}{Cogen}
\newtheorem{theorem}{Theorem}[section]
\newtheorem{lemma}[theorem]{Lemma}
\newtheorem{prop}[theorem]{Proposition}
\newtheorem{cor}[theorem]{Corollary}
\theoremstyle{definition}
\newtheorem{mydef}[theorem]{Definition}
\begin{document}

\thispagestyle{empty}

\title{Three Results Concerning Auslander Algebras}
\author{Stephen Zito\thanks{2020 Mathematics Subject Classification. Primary 16G10; Secondary 16E10.\ Keywords: Auslander algebras; tilting modules.}}
        
\maketitle

\begin{abstract}
Our first result provides a new characterization of Auslander algebras using a property of hereditary torsion pairs.  The second result shows an Auslander algebra $\Lambda$ is left or right glued if and only if $\Lambda$ is representation-finite.  Finally, our third result shows the module category of any Auslander algebra contains 
a tilting module with a particular property, which we call the hereditary property.  Applications of this property are investigated.

\end{abstract}

\section{Introduction}
We set the notation for the remainder of this paper. All algebras are assumed to be finite dimensional over an algebraically closed field $k$.  If $\Lambda$ is a $k$-algebra then denote by $\mathop{\text{mod}}\Lambda$ the category of finitely generated right $\Lambda$-modules and by $\mathop{\text{ind}}\Lambda$ a set of representatives of each isomorphism class of indecomposable right $\Lambda$-modules.  Given $M\in\mathop{\text{mod}}\Lambda$, the projective dimension of $M$ in $\mathop{\text{mod}}\Lambda$ is denoted by $\pd_{\Lambda}M$ and its injective dimension by $\id_{\Lambda}M$.  We denote by $\add M$ the smallest additive full subcategory of $\mathop{\text{mod}}\Lambda$ containing $M$, that is, the full subcategory of $\mathop{\text{mod}}\Lambda$ whose objects are the direct sums of direct summands of the module $M$.  We let $\tau_{\Lambda}$ and $\tau^{-1}_{\Lambda}$ be the Auslander-Reiten translations in $\mathop{\text{mod}}\Lambda$.  $D$ will denote the standard duality functor $\Hom_k(-,k)$.  Finally,  let $\mathop{\text{gl.dim}}\Lambda$ stand for the global dimension and $\mathop{\text{domdim}}\Lambda$ stand for the dominant dimension of an algebra $\Lambda$ (see Definition $\ref{def3}$).

Let $\Lambda$ be an algebra of finite type and $M_1,M_2,\cdots,M_n$ be a complete set of representatives of the isomorphism classes of indecomposable $\Lambda$-modules.  Then  $A=\text{End}_{\Lambda}(\oplus_{i=1}^nM_i)$ is the Auslander algebra of $\Lambda$ (see Definition $\ref{Auslander}$). 
\par
 Let $\mathcal{C}_{\Lambda}$ be the full subcategory of $\mathop{\text{mod}}\Lambda$ consisting of all modules generated and cogenerated by the direct sum of representatives of the isomorphism classes of all indecomposable projective-injective $\Lambda$-modules (see Definition $\ref{Q}$).  When $\mathop{\text{gl.dim}}\Lambda=2$, Crawley-Boevey and Sauter showed in $\cite{CBS}$ that the algebra $\Lambda$ is an Auslander algebra if and only if there exists a tilting $\Lambda$-module $T_{\mathcal{C}}$ in $\mathcal{C}_{\Lambda}$. 
\par
Work by Nguyen, Reiten, Todorov, and Zhu in $\cite{NRTZ}$ showed the existence of such a tilting module is equivalent to the dominant dimension being at least $2$ without any condition on the global dimension of $\Lambda$.  They also gave a precise description of such a tilting module.
\par
Our first result provides a new characterization of Auslander algebras  using a property of hereditary torsion pairs (see Definition $\ref{Her}$).  In what follows, let $\mathcal{P}^1(\Lambda)=\{M\in\mathop{\text{mod}}\Lambda~|~\pd_{\Lambda}M\leq1\}$.
\begin{theorem}
Let $\Lambda$ be an algebra.  Then $\Lambda$ is an Auslander algebra if and only if there exists a hereditary torsion pair $(\mathcal{T},\mathcal{F})$ such that $\mathcal{P}^1(\Lambda)=\mathcal{F}$ and, if $I$ is any indecomposable injective $\Lambda$-module, then $\pd_{\Lambda}I\neq1$.
\end{theorem}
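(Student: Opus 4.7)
The plan is to handle the two directions separately. The backward direction is cleaner, so I would dispatch it first. Starting from a hereditary torsion pair $(\mathcal{T},\mathcal{F})$ with $\mathcal{F}=\mathcal{P}^1(\Lambda)$ and $\pd_\Lambda I\neq 1$ for every indecomposable injective $I$, closure of $\mathcal{F}$ under submodules immediately gives $\mathop{\text{gl.dim}}\Lambda\le 2$: every projective lies in $\mathcal{P}^1$, so submodules of projectives do too, which forces the first syzygy of every simple to have $\pd\le 1$. For dominant dimension, the hereditary property says $\mathcal{F}$ is closed under injective envelopes, so for any $M\in\mathcal{P}^1$ the envelope is in $\mathcal{P}^1$; combined with the hypothesis $\pd I\neq 1$, each indecomposable summand of this envelope must have projective dimension zero, making the envelope projective. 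Applying this to $M=\Lambda$ shows that $I^0=E(\Lambda)$ is projective, and then the short exact sequence $0\to\Lambda\to I^0\to I^0/\Lambda\to 0$ forces $I^0/\Lambda\in\mathcal{P}^1$, so the next term $I^1$ of the minimal injective coresolution is also projective. Hence $\mathop{\text{domdim}}\Lambda\ge 2$, and $\Lambda$ is Auslander.

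For the forward direction, suppose $\Lambda$ is Auslander. The bound $\mathop{\text{gl.dim}}\Lambda\le 2$ makes $\mathcal{P}^1$ closed under submodules, extensions, and direct sums via standard long exact sequence arguments, yielding a torsion pair with $\mathcal{F}=\mathcal{P}^1$. To verify $\pd_\Lambda I\neq 1$ for indecomposable injective $I$, I would use that $\mathop{\text{domdim}}\Lambda\ge 2$ yields an exact sequence $0\to\Lambda\to T^0\to T^1\to C\to 0$ with $T^0,T^1$ projective-injective, exhibiting $\Lambda$ as a second syzygy of $C$; combined with $\mathop{\text{gl.dim}}\Lambda\le 2$ a dimension shift gives $\Ext^1_\Lambda(D\Lambda,\Lambda)=\Ext^3_\Lambda(D\Lambda,C)=0$. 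If some indecomposable injective $I$ had $\pd I\le 1$, then applying $\Hom_\Lambda(I,-)$ to a projective presentation of an arbitrary module $N$ shows $\Ext^1_\Lambda(I,N)=0$ (using $\Ext^1_\Lambda(I,\Lambda)=0$ and $\Ext^2_\Lambda(I,-)=0$), forcing $I$ projective. So $\pd I\in\{0,2\}$.

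The main obstacle is verifying that the torsion pair is hereditary: given $M\in\mathcal{P}^1$, one must show $E(M)\in\mathcal{P}^1$. Since $E(M)$ is a sum of the indecomposable injective envelopes $I(S)$ of simples $S$ in the socle of $M$, and each such $S$ lies in $\mathcal{P}^1$ by the subclosure, this reduces to showing that $\pd_\Lambda S\le 1$ implies $I(S)$ is projective. I would prove this via the Auslander correspondence $\Lambda=\End_{\Lambda'}(M)$: applying $\Hom_{\Lambda'}(M,-)$ to the almost split sequence ending at the indecomposable $\Lambda'$-module $M_j$ corresponding to the simple $S_j$ produces a minimal projective resolution of $S_j$ whose length is $2$ when $M_j$ is non-projective in $\Lambda'$ and at most $1$ otherwise; hence $\pd S_j\le 1$ iff $M_j$ is projective in $\Lambda'$. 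When this holds, the Nakayama-functor identity $D\Hom_{\Lambda'}(M_j,M)\cong\Hom_{\Lambda'}(M,\nu_{\Lambda'}M_j)$ identifies $I(S_j)$ with a module in $\add\Lambda$, hence projective, completing the argument.
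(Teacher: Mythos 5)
Your backward direction is correct and is genuinely different from the paper's: you verify $\mathop{\text{gl.dim}}\Lambda\le 2$ and $\mathop{\text{domdim}}\Lambda\ge 2$ directly and invoke Auslander's classical characterization, whereas the paper instead shows that every injective with socle of projective dimension at most $1$ is projective and appeals to the Li--Zhang criterion. Likewise, your construction of the torsion pair ($\mathcal{P}^1(\Lambda)$ is closed under submodules and extensions when $\mathop{\text{gl.dim}}\Lambda\le2$) and your proof of hereditariness via the Auslander correspondence, almost split sequences, and the Nakayama functor are correct; the paper gets both by taking $\mathcal{F}=\Cogen(T_{\mathcal{C}})$ for the tilting--cotilting module $T_{\mathcal{C}}\in\mathcal{C}_\Lambda$ and citing Proposition \ref{mine}.

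There is, however, a genuine error in your forward-direction argument that $\pd_\Lambda I\neq1$. From $0\to\Lambda\to T^0\to T^1\to C\to 0$ with $T^0,T^1$ injective, dimension shifting in the second variable gives $\Ext^3_\Lambda(X,\Lambda)\cong\Ext^1_\Lambda(X,C)$, not $\Ext^1_\Lambda(X,\Lambda)\cong\Ext^3_\Lambda(X,C)$; your shift goes the wrong way. Indeed the claim $\Ext^1_\Lambda(D\Lambda,\Lambda)=0$ is false for Auslander algebras: for the Auslander algebra of $k[x]/(x^2)$ (the Nakayama algebra with projectives $P_1$ of length $2$ and $P_2$ of length $3$), the non-projective indecomposable injective $I_1$ has minimal presentation $0\to S_2\to P_2\to I_1\to 0$, and since every homomorphism $P_2\to P_1$ kills $\mathop{\text{soc}}P_2$, one gets $\Ext^1_\Lambda(I_1,P_1)\cong\Hom_\Lambda(S_2,P_1)=k\neq0$. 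The good news is that you do not need this step at all: if $I$ is indecomposable injective with $\pd_\Lambda I\le1$, then $\mathop{\text{soc}}I\in\mathcal{P}^1(\Lambda)$ by closure under submodules, so by the very argument you use for hereditariness ($\pd_\Lambda S_j\le1$ iff $M_j$ is projective, in which case $I(S_j)\in\add\Lambda$), $I=I(\mathop{\text{soc}}I)$ is projective. Replacing the Ext computation by this observation closes the gap.
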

Our second result concerns Auslander algebras which are also left or right glued algebras.  These algebras were introduced by Assem and Coelho in $\cite{AC}$ where a convenient homological characterization was proved (see Theorem $\ref{left}$).  Trivially, any representation-finite algebra is left and right glued and we prove this is the only case for Auslander algebras.
\begin{theorem}
Let $\Lambda$ be an Auslander algebra.  Then $\Lambda$ is left or right glued if and only if $\Lambda$ is representation-finite.
\end{theorem}
Our third result concerns tilting modules possessing a certain property.  We introduce the hereditary property (see Definition $\ref{MINE}$).  We show that the module category of any Auslander Algebra contains a tilting module with the hereditary property.
\begin{theorem}
Let $\Lambda$ be an Auslander algebra.  Then there exists a tilting $\Lambda$-module with the hereditary property.
\end{theorem}
We further investigate this property.  In particular, we prove a characterization of hereditary algebras.
\begin{theorem}
Let $\Lambda$ be an algebra.  Then $\Lambda$ is hereditary if and only if every tilting $\Lambda$-module possesses the hereditary property.
\end{theorem}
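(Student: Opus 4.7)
The plan is to prove the two implications separately. The forward direction should follow directly from the homological restrictions imposed by hereditariness, while the backward direction requires exhibiting a tilting module whose failure of the hereditary property detects global dimension two or higher.

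For the forward direction, suppose $\Lambda$ is hereditary, so that $\mathop{\text{gl.dim}}\Lambda\leq 1$ and $\mathcal{P}^1(\Lambda)=\mathop{\text{mod}}\Lambda$. Any tilting $\Lambda$-module $T$ then automatically satisfies $\pd_\Lambda T\leq 1$, and $\Ext^i$ vanishes identically for $i\geq 2$. Under these strong homological constraints, the defining clauses of the hereditary property should reduce to conditions that hold automatically for every tilting module. I would walk through the definition clause by clause, using that $\mathcal{P}^1(\Lambda)$ coincides with the whole module category and that the torsion pair generated by any tilting module over a hereditary algebra is well understood; in particular, every short exact sequence appearing in the verification is of length at most two, so no higher-Ext obstruction can arise.

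For the backward direction, assume every tilting $\Lambda$-module has the hereditary property and conclude $\mathop{\text{gl.dim}}\Lambda\leq 1$. I would argue by contrapositive: assume $\mathop{\text{gl.dim}}\Lambda\geq 2$ and construct a tilting module that violates the hereditary property. Starting from a simple module $S$ with $\pd_\Lambda S\geq 2$ (which exists whenever $\Lambda$ is non-hereditary), one builds a partial tilting module out of the early terms of the minimal projective resolution of $S$ and applies Bongartz completion to obtain a full tilting module $T$ whose structure carries the obstruction to heredity. Matching the hereditary property against this obstruction should then force $\pd_\Lambda S\leq 1$, contradicting the choice of $S$. When $\Lambda$ is additionally Gorenstein, the cleaner witness $T=D\Lambda$ is available, and for Auslander algebras the situation is already clarified by the previous theorem, which produces at least one tilting module with the hereditary property but does not claim the property for every tilting module.

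The main obstacle is the backward direction: singling out the tilting module whose failure of the hereditary property is provably forced by $\mathop{\text{gl.dim}}\Lambda\geq 2$. Bongartz completion supplies the flexibility to produce tilting modules containing any chosen partial tilting summand, but aligning the precise algebraic content of the hereditary property with the homological failure one wants to witness is the delicate step. Once this alignment is established, the contradiction with the hypothesis follows immediately.
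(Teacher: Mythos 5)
Both directions of your plan have real gaps, and in both cases the missing ingredient is concrete rather than a matter of polish. For the direction ``every tilting module has the hereditary property $\Rightarrow$ $\Lambda$ hereditary,'' you set up an elaborate contrapositive via Bongartz completion of a partial tilting module built from a projective resolution of a simple of projective dimension $\geq 2$, and you yourself flag the alignment of that construction with the hereditary property as the unresolved delicate step. It never needs to be resolved: the regular module $\Lambda_{\Lambda}$ is itself a tilting module with $\Gen(\Lambda_{\Lambda})=\mathop{\text{mod}}\Lambda$, so applying the hereditary property to $T=\Lambda_{\Lambda}$ hands you, for \emph{every} module $M$, a short exact sequence $0\to P'\to P''\to M\to 0$ with $P',P''$ projective, i.e.\ $\pd_{\Lambda}M\leq 1$ outright. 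That one-line witness is what your plan is missing; the Bongartz route, as described, never actually produces a contradiction (note also that the terms of a projective resolution are projective, so the partial tilting module you propose to complete is just a projective module and carries no obstruction by itself, and your fallback witness $D\Lambda$ does not generate all of $\mathop{\text{mod}}\Lambda$ in general).

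For the direction ``$\Lambda$ hereditary $\Rightarrow$ every tilting module has the property,'' your instinct that ``no higher-Ext obstruction can arise'' is the right one, but there are no ``defining clauses'' to check one by one: the hereditary property is an existence statement, and you must actually manufacture the sequence $0\to T'\to T''\to M\to 0$ and prove its kernel lies in $\add T$. The mechanism is: take the special $\add T$-precover $0\to L\to T_0\to M\to 0$ with $L\in\Gen T$ (Proposition \ref{need}), apply $\Hom_{\Lambda}(-,N)$ for $N\in\Gen T$ to get $\Ext^1_{\Lambda}(T_0,N)\to\Ext^1_{\Lambda}(L,N)\to\Ext^2_{\Lambda}(M,N)$, kill the outer terms (the first because $T_0\in\add T$ is Ext-projective in $\Gen T$, the last because $\Lambda$ is hereditary), and conclude $L$ is Ext-projective in $\Gen T$, hence $L\in\add T$ by Proposition \ref{Tilting}. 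Without naming the precover and the characterization of $\add T$ as the Ext-projectives of $\Gen T$, the argument does not close.
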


\subsection{Tilting and Cotilting Modules}
  We begin with the definition of tilting and cotilting modules.
   \begin{mydef}
   \label{Tilting}
    Let $\Lambda$ be an algebra.  A $\Lambda$-module $T$ is a $\emph{partial tilting module}$ if the following two conditions are satisfied: 
   \begin{enumerate}
   \item[($\text{1}$)] $\pd_{\Lambda}T\leq1$.
   \item[($\text{2}$)] $\Ext_{\Lambda}^1(T,T)=0$.
   \end{enumerate}
   A partial tilting module $T$ is called a $\emph{tilting module}$ if it also satisfies:
   \begin{enumerate}
   \item[($\text{3}$)] There exists a short exact sequence $0\rightarrow \Lambda_{\Lambda}\rightarrow T'\rightarrow T''\rightarrow 0$ in $\mathop{\text{mod}}\Lambda$ with $T'$ and $T''$ $\in \add T$.
   \end{enumerate}
   A $\Lambda$-module $C$ is a $\emph{partial cotilting module}$ if the following two conditions are satisfied: 
   \begin{enumerate}
   \item[($\text{1}'$)] $\id_{\Lambda}C\leq1$.
   \item[($\text{2}'$)] $\Ext_{\Lambda}^1(C,C)=0$.
   \end{enumerate}   
   A partial cotilting module is called a $\emph{cotilting module}$ if it also satisfies:
   \begin{enumerate}
   \item[($\text{3}'$)] There exists a short exact sequence $0\rightarrow C'\rightarrow C''\rightarrow D\Lambda_{\Lambda}\rightarrow 0$ in $\mathop{\text{mod}}\Lambda$ with $C'$ and $C''$ $\in \add C$.   
  \end{enumerate} 
   \end{mydef}
 
 Tilting modules and cotilting modules induce torsion pairs in a natural way.  
 \begin{mydef} 
 \label{def5}
 A pair of full subcategories $(\mathcal{T},\mathcal{F})$ of $\mathop{\text{mod}}\Lambda$ is called a $\emph{torsion pair}$ if the following conditions are satisfied:
   \begin{enumerate}
   \item[(1)] $\text{Hom}_{\Lambda}(M,N)=0$ for all $M\in\mathcal{T}$, $N\in\mathcal{F}.$
   \item[(2)] $\text{Hom}_{\Lambda}(M,-)|_\mathcal{F}=0$ implies $M\in\mathcal{T}.$
   \item[(3)] $\text{Hom}_{\Lambda}(-,N)|_\mathcal{T}=0$ implies $N\in\mathcal{F}.$
   \end{enumerate}
   \end{mydef}
 We say $\mathcal{T}$ is a $\it{torsion~class}$ while $\mathcal{F}$ is a $\it{torsion-free~class}$.  It can be shown that $\mathcal{T}$ is closed under images, direct sums, and extensions while $\mathcal{F}$ is closed under submodules, direct products, and extensions.  See $\cite{ASS}$ for more details.
 \begin{mydef}
\label{Her}
We say a torsion pair $(\mathcal{T},\mathcal{F})$ is $\emph{hereditary}$ if $\mathcal{T}$ is closed under submodules.  This is equivalent to $\mathcal{F}$ being closed under injective envelopes.
\end{mydef} 
 We say a torsion pair $(\mathcal{T},\mathcal{F})$
 is $\it{splitting}$ if every indecomposable $\Lambda$-module belongs to either $\mathcal{T}$ or $\mathcal{F}$.  We have the following characterization.
  \begin{prop}$\emph{\cite[VI,~Proposition~1.7]{ASS}}$
 \label{split}
 Let $(\mathcal{T},\mathcal{F})$ be a torsion pair in $\mathop{\emph{mod}}\Lambda$.  The following are equivalent:
 \begin{enumerate}
 \item[$\emph{(a)}$] $(\mathcal{T},\mathcal{F})$ is splitting.
 \item[$\emph{(b)}$] If $M\in\mathcal{T}$, then $\tau_{\Lambda}^{-1}M\in\mathcal{T}$.
 \item[$\emph{(c)}$] If $N\in\mathcal{F}$, then $\tau_{\Lambda}N\in\mathcal{F}$.
 \end{enumerate}
 \end{prop}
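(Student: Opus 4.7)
The plan is to prove that (a), (b), (c) are equivalent by routing through the Ext-vanishing reformulation $\Ext^1_\Lambda(F,T)=0$ for all $F\in\mathcal{F}$ and $T\in\mathcal{T}$, and then using the Auslander-Reiten formula to translate this vanishing into the $\tau$-invariance conditions on $\mathcal{T}$ and $\mathcal{F}$.

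First, I would verify that $(\mathcal{T},\mathcal{F})$ is splitting if and only if $\Ext^1_\Lambda(F,T)=0$ for every $F\in\mathcal{F}$ and $T\in\mathcal{T}$. Given any extension $0\to T\to M\to F\to 0$ of this form, left exactness of the torsion radical $t(-)$ forces $tM=T$, so the given sequence coincides with the canonical torsion sequence of $M$. Thus the Ext-vanishing amounts to every canonical torsion sequence splitting, and for indecomposable $M$ this forces $tM=0$ or $tM=M$, recovering the splitting condition; conversely, if every indecomposable lies in $\mathcal{T}\cup\mathcal{F}$, then every module decomposes as a direct sum of a torsion and torsion-free part, and every such Ext group vanishes.

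For (a)$\Rightarrow$(b), suppose $M\in\mathcal{T}$ is indecomposable and non-injective, and consider the almost split sequence $0\to M\to E\to\tau_{\Lambda}^{-1}M\to 0$. By (a), the middle term decomposes as $E_{\mathcal{T}}\oplus E_{\mathcal{F}}$ with summands in the respective subcategories. Were $\tau_{\Lambda}^{-1}M\in\mathcal{F}$, the Hom-orthogonality between $\mathcal{T}$ and $\mathcal{F}$ would force $M$ to embed into $E_{\mathcal{T}}$ and the surjection $E\to\tau_{\Lambda}^{-1}M$ to factor through $E_{\mathcal{F}}$; combined with the indecomposability of $\tau_{\Lambda}^{-1}M$, this would split the almost split sequence, a contradiction. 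The dual argument yields (a)$\Rightarrow$(c).

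For (b)$\Rightarrow$(a), I would invoke the Auslander-Reiten formula $\Ext^1_\Lambda(F,T)\cong D\Hom_\Lambda(\tau_{\Lambda}^{-1}T,F)$ modulo maps factoring through projectives, after first separating off the injective summands of $T$ on which $\Ext^1_\Lambda(-,T)$ vanishes trivially. Hypothesis (b) places $\tau_{\Lambda}^{-1}T$ inside $\mathcal{T}$, so the Hom group vanishes on the nose and the Ext vanishes, giving the Ext-reformulation of (a). The implication (c)$\Rightarrow$(a) is dual, using the other form $\Ext^1_\Lambda(F,T)\cong D\Hom_\Lambda(T,\tau_{\Lambda}F)$ modulo injective factorings. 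The main obstacle is careful bookkeeping around the injective summands of $T$ and projective summands of $F$ so that the AR-formula applies literally; beyond that, the proof reduces to short orthogonality computations and the non-splitting of almost split sequences.
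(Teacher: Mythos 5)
The paper offers no proof of this proposition; it is quoted verbatim from \cite[VI, Proposition 1.7]{ASS}. Your argument is correct and is essentially the standard textbook proof: first identify condition (a) with the vanishing of $\Ext^1_{\Lambda}(F,T)$ for all $F\in\mathcal{F}$, $T\in\mathcal{T}$, then pass between that vanishing and the $\tau$-stability conditions via the Auslander--Reiten formulas (for (b)$\Rightarrow$(a) and (c)$\Rightarrow$(a)) and via the non-splitness of almost split sequences (for (a)$\Rightarrow$(b),(c)). One small point of justification: in an extension $0\to T\to M\to F\to 0$ the equality $tM=T$ does not follow from ``left exactness of the torsion radical'' --- $t$ is left exact precisely when the torsion pair is hereditary, which is not assumed here. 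The correct (and immediate) reason is that $T\subseteq tM$ because $T$ is a torsion submodule of $M$, while the image of $tM$ in $F$ is a quotient of a torsion module sitting inside a torsion-free module, hence zero, so $tM\subseteq T$. With that repair the proof is complete; the bookkeeping you mention around injective summands of $T$ is in fact unnecessary, since the Auslander--Reiten formulas hold for arbitrary modules and $\Hom_{\Lambda}(\tau_{\Lambda}^{-1}T,F)=0$ already kills the stable $\Hom$ group.
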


 \begin{mydef}
\label{Gen/Cogen}
Let $M$ be a $\Lambda$-module.  We define $\mathop{Gen} M$ to be the class of all modules $X$ in $\mathop{\text{mod}}\Lambda$ generated by $M$, that is, the modules $X$ such that there exists an integer $d\geq0$ and an epimorphism $M^d\rightarrow X$ of $\Lambda$-modules.  Here, $M^d$ is the direct sum of $d$ copies of $M$.  Dually, we define $\mathop{Cogen}M$ to be the class of all modules $Y$ in $\mathop{\text{mod}}\Lambda$ cogenerated  by $M$, that is, the modules $Y$ such that there exist an integer $d\geq0$ and a monomorphism $Y\rightarrow M^d$ of $\Lambda$-modules.
\end{mydef}

Now, consider the following full subcategories of $\mathop{\text{mod}}\Lambda$ where $T$ is a tilting $\Lambda$-module.
 \[
 \mathcal{T}(T)=\{M\in\mathop{\text{mod}}\Lambda~|~ \text{Ext}_{\Lambda}^{1}(T,M)=0\}
 \]
 \[
 \mathcal{F}(T)=\{M\in\mathop{\text{mod}}\Lambda~|~\text{Hom}_{\Lambda}(T,M)=0\}
 \]
 Then $(\text{Gen}T,\mathcal{F}(T))=(\mathcal{T}(T),\text{Cogen}(\tau_{\Lambda} T))$ is a torsion pair in $\mathop{\text{mod}}\Lambda$. 
 Consider the following full subcategories of $\mathop{\text{mod}}\Lambda$ where $C$ is a cotilting $\Lambda$-module. 
  \[
 \mathcal{F}(C)=\{M\in\mathop{\text{mod}}\Lambda~|~ \text{Ext}_{\Lambda}^{1}(M,C)=0\}
 \]
 \[
 \mathcal{T}(C)=\{M\in\mathop{\text{mod}}\Lambda~|~\text{Hom}_{\Lambda}(M,C)=0\}
 \] 
 Then $(\text{Gen}(\tau_{\Lambda}^{-1}C),\mathcal{F}(C))=(\mathcal{T}(C),\text{Cogen}(C))$ is a torsion pair in $\mathop{\text{mod}}\Lambda$.  Once again, we refer the reader to $\cite{ASS}$ for more details.
We need two properties of tilting modules.  We start with a definition.

\begin{mydef}
Let $\mathcal{T}$ be a full subcategory of $\mathop{\text{mod}}\Lambda$.  We say a $\Lambda$-module $X\in\mathcal{T}$ is $\text{Ext}$-$\it{projective}$ if $\text{Ext}_{\Lambda}^1(X,-)|_{\mathcal{T}}=0$.
\end{mydef} 

 \begin{prop}$\emph{\cite[VI.2,~Theorem~2.5]{ASS}}$ 
 \label{Tilting}
 Let $\Lambda$ be an algebra and $T$ a tilting $\Lambda$-module with $(\mathcal{T}(T),\mathcal{F}(T))$ the induced torsion pair.  Then, for every $X\in\mathop{\emph{mod}}\Lambda$, $X\in\add T$ if and only if $X$ is $\emph{Ext}$-projective in $\mathcal{T}(T)$.
 \end{prop}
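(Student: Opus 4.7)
The plan is to treat the two directions of the equivalence separately: the forward direction falls out from additivity of $\Ext$, while the converse requires the construction of a specific short exact sequence followed by a standard long-exact-sequence argument.

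For the implication $X \in \add T \Rightarrow X$ is $\Ext$-projective in $\mathcal{T}(T)$, observe that if $X$ is a direct summand of $T^n$, then $\Ext_\Lambda^1(X, M)$ is a direct summand of $\Ext_\Lambda^1(T, M)^n$, which vanishes for every $M \in \mathcal{T}(T)$ by the very definition of $\mathcal{T}(T)$.

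For the converse, suppose $X$ is $\Ext$-projective in $\mathcal{T}(T)$. In particular $X \in \mathcal{T}(T) = \Gen T$, the equality being the standard description of the tilting torsion class recalled in the paragraph preceding the proposition. The crux is to build a short exact sequence
\[
0 \longrightarrow K \longrightarrow T^n \longrightarrow X \longrightarrow 0
\]
with $K \in \mathcal{T}(T)$. To arrange this, I would choose $n$ large enough that the components $T \to X$ of the chosen epimorphism both surject onto $X$ and span $\Hom_\Lambda(T, X)$ as a finite-dimensional $k$-vector space; the induced map $\Hom_\Lambda(T, T^n) \to \Hom_\Lambda(T, X)$ is then surjective. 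Applying $\Hom_\Lambda(T, -)$ to the sequence and using $\Ext_\Lambda^1(T, T^n) = 0$, which follows from axiom (2) of the tilting definition, the long exact sequence forces $\Ext_\Lambda^1(T, K) = 0$, so $K \in \mathcal{T}(T)$.

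With the sequence in hand, apply $\Hom_\Lambda(X, -)$. Because $X$ is $\Ext$-projective in $\mathcal{T}(T)$ and $K \in \mathcal{T}(T)$, one obtains $\Ext_\Lambda^1(X, K) = 0$, so the long exact sequence yields a lift of the identity of $X$ through $T^n \to X$. This lift splits the sequence, exhibiting $X$ as a direct summand of $T^n$, and hence $X \in \add T$. The main obstacle is the careful choice of the epimorphism $T^n \to X$ so that its kernel lies in $\mathcal{T}(T)$; once this is arranged, the remainder of the argument is a routine long-exact-sequence chase.
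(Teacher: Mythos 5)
Your proof is correct; the paper itself gives no argument here, simply citing \cite[VI.2, Theorem 2.5]{ASS}, and your two-step argument (additivity of $\Ext^1$ for the easy direction; an epimorphism $T^n \to X$ whose components span $\Hom_\Lambda(T,X)$, giving a kernel in $\mathcal{T}(T)$ and hence a split sequence) is exactly the standard proof from that reference. The only cosmetic point is that in the forward direction you should note $\add T \subseteq \Gen T = \mathcal{T}(T)$ so that ``$\Ext$-projective in $\mathcal{T}(T)$'' applies to $X$ at all.
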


\begin{prop}$\emph{\cite[VI.2,~Theorem~2.5]{ASS}}$
\label{need}
Let $\Lambda$ be an algebra and $T$ a tilting $\Lambda$-module with $(\mathcal{T}(T),\mathcal{F}(T))$ the induced torsion pair.  Then, for every module $M\in\mathcal{T}(T)$, there exists a short exact sequence 
$0\rightarrow L \rightarrow T_0\rightarrow M\rightarrow 0$ with $T_0\in\add T$ and $L\in\mathcal{T}(T)$.
\end{prop}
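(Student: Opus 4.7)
The plan is to construct the required sequence by choosing a right $\add T$-approximation of $M$ carefully, and then verifying via the long exact $\Ext$ sequence that the kernel is $T$-torsion. Recall that the torsion class $\mathcal{T}(T)$ coincides with $\mathop{\text{Gen}}T$, so $M$ automatically admits some surjection from a finite power of $T$; the task is to pick this surjection well enough that its kernel lands back in $\mathcal{T}(T)$.

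First, observe that $\Hom_\Lambda(T,M)$ is finitely generated as a right module over $B=\End_\Lambda(T)$, since it is finite-dimensional over $k$. Pick a $B$-generating set $f_1,\ldots,f_n\in\Hom_\Lambda(T,M)$ and set $T_0=T^n\in\add T$ with $f=(f_1,\ldots,f_n):T_0\to M$. By construction the induced map
\[
\Hom_\Lambda(T,f):\Hom_\Lambda(T,T_0)\longrightarrow\Hom_\Lambda(T,M)
\]
is surjective.

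Second, I would verify that $f$ is itself an epimorphism. Since $M\in\mathop{\text{Gen}}T$, there exists a surjection $h=(h_1,\ldots,h_m):T^m\to M$, and each component $h_i\in\Hom_\Lambda(T,M)$ lifts along $f$ by the previous step. Consequently $h$ factors through $f$, forcing $f$ to be surjective as well. Setting $L=\ker f$ yields the short exact sequence $0\to L\to T_0\to M\to 0$.

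Finally, apply $\Hom_\Lambda(T,-)$ to this sequence to obtain the exact fragment
\[
\Hom_\Lambda(T,T_0)\to\Hom_\Lambda(T,M)\to\Ext^1_\Lambda(T,L)\to\Ext^1_\Lambda(T,T_0).
\]
The leftmost arrow is surjective by the first step, and $\Ext^1_\Lambda(T,T_0)\cong\Ext^1_\Lambda(T,T)^n=0$ by the partial tilting axiom. Hence $\Ext^1_\Lambda(T,L)=0$, i.e., $L\in\mathcal{T}(T)$. The main subtlety lies in the very first step: an arbitrary epimorphism $T^d\twoheadrightarrow M$ need not have its kernel in $\mathcal{T}(T)$, and choosing $T_0$ via a $B$-generating set of $\Hom_\Lambda(T,M)$ is precisely what ensures $\Hom_\Lambda(T,T_0)\to\Hom_\Lambda(T,M)$ is surjective, which is the decisive input to the $\Ext$-vanishing.
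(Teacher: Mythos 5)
Your proof is correct and takes essentially the standard route: the paper itself gives no argument for this statement, quoting it from \cite[VI.2, Theorem~2.5]{ASS}, and the proof there is exactly yours — choose a $B$-generating set of $\Hom_\Lambda(T,M)$ to build a right $\add T$-approximation $T_0\to M$, use $\mathcal{T}(T)=\Gen T$ to see it is onto, and read off $\Ext^1_\Lambda(T,L)=0$ from the long exact sequence together with $\Ext^1_\Lambda(T,T)=0$. Your closing remark correctly identifies the one genuine subtlety (an arbitrary epimorphism $T^d\twoheadrightarrow M$ will not do).
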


\subsection{Properties of the Subcategory $\mathcal{C}_{\Lambda}$}
Let $\Lambda$ be an algebra.
\begin{mydef}
\label{Q}
Let $\tilde{Q}$ be the direct sum of representatives of the isomorphism classes of all indecomposable projective-injective $\Lambda$-modules.  Let $\mathcal{C}_{\Lambda}:=(\text{Gen}\tilde{Q})\cap(\text{Cogen}\tilde{Q})$ be the full subcategory consisting of all modules generated and cogenerated by $\tilde{Q}$.
\end{mydef}
Nguyen, Reiten, Todorov, and Zhu in $\cite{NRTZ}$ studied the existence of a tilting module in $\mathcal{C}_{\Lambda}$.  We need three of their preliminary results.
\begin{prop}$\emph{\cite[Proposition 1.1.3]{NRTZ}}$
\label{prelim1}
If $P$ is projective and $P$ is in $\mathcal{C}_{\Lambda}$, then $P$ is projective-injective.  If $I$ is injective and $I$ is in $\mathcal{C}_{\Lambda}$, then $I$ is projective-injective.
\end{prop}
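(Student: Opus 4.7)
The plan is to exploit the symmetric shape of the defining intersection $\mathcal{C}_{\Lambda} = (\mathop{Gen}\tilde{Q}) \cap (\mathop{Cogen}\tilde{Q})$ together with the fact that $\tilde{Q}$ is itself both projective and injective. Each half of the statement should use exactly one of the two conditions, and the projectivity (resp.\ injectivity) of the module in question should force a canonical short exact sequence to split.

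For the first assertion, suppose $P$ is projective with $P \in \mathcal{C}_{\Lambda}$. Since $P \in \mathop{Gen}\tilde{Q}$, I can choose an epimorphism $\pi \colon \tilde{Q}^{d} \twoheadrightarrow P$ for some $d \geq 0$. Because $P$ is projective, the identity map $P \to P$ lifts through $\pi$, so $\pi$ splits and exhibits $P$ as a direct summand of $\tilde{Q}^{d}$. Since $\tilde{Q}$ is projective-injective, $\tilde{Q}^{d}$ is injective, and therefore its summand $P$ is injective; combined with projectivity, $P$ is projective-injective.

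The second assertion is dual. Suppose $I$ is injective with $I \in \mathcal{C}_{\Lambda}$. Now I use the other half: $I \in \mathop{Cogen}\tilde{Q}$ gives a monomorphism $\iota \colon I \hookrightarrow \tilde{Q}^{d}$. Applying injectivity of $I$ to the identity $I \to I$ along $\iota$ produces a retraction $\tilde{Q}^{d} \to I$, so $I$ is a direct summand of $\tilde{Q}^{d}$. Since $\tilde{Q}$ is projective, so is $\tilde{Q}^{d}$, hence so is $I$, and $I$ is projective-injective.

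There is really no hard step: the only thing to be careful about is pairing the correct half of $\mathcal{C}_{\Lambda}$ with the correct hypothesis on the module. Using $\mathop{Cogen}\tilde{Q}$ in the projective case (or $\mathop{Gen}\tilde{Q}$ in the injective case) would produce sequences in which neither the end term is projective nor the initial term is injective, so no automatic splitting would be available. Matching the hypotheses as above makes the argument immediate.
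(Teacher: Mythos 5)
Your argument is correct and is the standard one: the paper itself cites this as Proposition 1.1.3 of [NRTZ] without reproving it, and the proof there is exactly this splitting argument (projectivity of $P$ splits the epimorphism from $\tilde{Q}^{d}$, injectivity of $I$ splits the monomorphism into $\tilde{Q}^{d}$, and summands of the projective-injective module $\tilde{Q}^{d}$ inherit both properties). Your closing remark about pairing the correct half of $\mathcal{C}_{\Lambda}$ with the correct hypothesis is also the right thing to be careful about.
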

\begin{lemma}$\emph{\cite[Lemma 1.1.4]{NRTZ}}$
\label{prelim2}
Let $X$ be in $\mathcal{C}_{\Lambda}$.  Let $Y$ be a $\Lambda$-module with $\pd_{\Lambda}Y=1$.  Then $\Ext_{\Lambda}^1(Y,X)=0$.
\end{lemma}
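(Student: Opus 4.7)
The plan is to use a minimal projective resolution of $Y$ together with the dual roles of $\tilde{Q}$ being both projective and injective. Since $\pd_\Lambda Y = 1$, there is a short exact sequence
\[
0 \to P_1 \xrightarrow{\iota} P_0 \to Y \to 0
\]
with $P_0, P_1$ projective. Applying $\Hom_\Lambda(-,X)$ gives the long exact sequence
\[
\Hom_\Lambda(P_0,X) \xrightarrow{\iota^*} \Hom_\Lambda(P_1,X) \to \Ext^1_\Lambda(Y,X) \to \Ext^1_\Lambda(P_0,X) = 0,
\]
so the task reduces to showing $\iota^*$ is surjective; equivalently, every morphism $f\colon P_1 \to X$ extends along $\iota$ to a morphism $P_0 \to X$.

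The key step is to route this extension through $\tilde{Q}^e$ for some $e$, exploiting that $\tilde{Q}$ is simultaneously projective and injective. Because $X \in \Gen \tilde{Q}$, there is a surjection $\pi\colon \tilde{Q}^e \twoheadrightarrow X$. Since $P_1$ is projective, the morphism $f\colon P_1 \to X$ lifts through $\pi$ to a morphism $\tilde{f}\colon P_1 \to \tilde{Q}^e$ with $\pi \circ \tilde{f} = f$. Now $\tilde{Q}^e$ is a direct sum of injective modules, hence injective, and $\iota\colon P_1 \hookrightarrow P_0$ is a monomorphism; thus $\tilde{f}$ extends to some $g\colon P_0 \to \tilde{Q}^e$ with $g \circ \iota = \tilde{f}$. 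Setting $h := \pi \circ g \colon P_0 \to X$, we get $h \circ \iota = \pi \circ g \circ \iota = \pi \circ \tilde{f} = f$, which is precisely the required extension.

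There is no real obstacle here; the only subtlety is recognizing that the hypothesis $X \in \mathcal{C}_\Lambda$ is used only through $X \in \Gen \tilde{Q}$ (the cogeneration half of the hypothesis is not needed for this particular Ext-vanishing, though it will presumably be needed elsewhere in \cite{NRTZ}). The argument is essentially a diagram chase combining projectivity of $P_1$ to lift against the surjection $\pi$ with injectivity of $\tilde{Q}^e$ to extend along the inclusion $\iota$, and the whole point of introducing $\tilde{Q}^e$ is that it plays both roles at once.
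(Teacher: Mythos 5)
Your proof is correct. Note that the paper itself does not prove this lemma --- it is quoted verbatim from \cite[Lemma 1.1.4]{NRTZ} --- so there is no in-text argument to compare against; judged on its own, your diagram chase is sound: projectivity of $P_1$ lifts $f$ through the epimorphism $\tilde{Q}^e\twoheadrightarrow X$, injectivity of $\tilde{Q}^e$ extends the lift along $P_1\hookrightarrow P_0$, and composing back down shows $\iota^*$ is surjective. Your side remark is also accurate: only the $\Gen\tilde{Q}$ half of $X\in\mathcal{C}_{\Lambda}$ is used (indeed only that $X$ is a quotient of an injective module). For what it is worth, the same conclusion drops out in one line from the long exact sequence attached to $0\to K\to \tilde{Q}^e\to X\to 0$: applying $\Hom_{\Lambda}(Y,-)$ gives $\Ext_{\Lambda}^1(Y,\tilde{Q}^e)\to\Ext_{\Lambda}^1(Y,X)\to\Ext_{\Lambda}^2(Y,K)$, where the left term vanishes because $\tilde{Q}^e$ is injective and the right term vanishes because $\pd_{\Lambda}Y=1$; this is the standard packaging of exactly the chase you performed.
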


\begin{prop}$\emph{\cite[Proposition 1.2.5]{NRTZ}}$
\label{prelim3}
Let $\Lambda$ be an algebra.  Let $\tilde{Q}$ and $\mathcal{C}_{\Lambda}$ be defined as above.
Let $\{X_i\}_{i\in I}$ be the set of representatives of indecomposable modules in $\mathcal{C}_{\Lambda}$ such that $\pd_{\Lambda}X_i=1$.  Then:
\begin{enumerate}
 \item[\emph{(1)}] The set $\{X_i\}_{i\in I}$ is finite, that is $I=\{1,2,\cdots,s\}$ for some $s<\infty$.
  \item[\emph{(2)}] Let $X=\bigoplus_{i=1}^sX_i$.  Then $\tilde{Q}\oplus X$ is a partial tilting module.
  \item[\emph{(3)}] If there is a tilting module $T_{\mathcal{C}}$ in $\mathcal{C}_{\Lambda}$, then $T_{\mathcal{C}}=\tilde{Q}\oplus X$.
 \item[\emph{(4)}] If there is a tilting module $T_{\mathcal{C}}$ in $\mathcal{C}_{\Lambda}$, then $T_{\mathcal{C}}$ is unique.
 \end{enumerate}
\end{prop}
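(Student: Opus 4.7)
The plan is to establish (2) and (1) in a single stroke via an Ext-vanishing computation combined with the standard bound on summands of a partial tilting module, and then to deduce (3) and (4) by a counting argument.

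First I would show that, for any finite subset $\{X_{i_1},\ldots,X_{i_k}\}$ of the indecomposable modules in $\mathcal{C}_\Lambda$ with $\pd_\Lambda = 1$, the direct sum $Y := \tilde Q \oplus X_{i_1} \oplus \cdots \oplus X_{i_k}$ satisfies the two conditions of a partial tilting module. Condition (1) of Definition \ref{Tilting} holds since $\pd_\Lambda \tilde Q = 0$ and $\pd_\Lambda X_{i_j} = 1$. For condition (2), I decompose $\Ext^1_\Lambda(Y,Y)$ componentwise: $\Ext^1_\Lambda(\tilde Q,-)$ vanishes because $\tilde Q$ is projective; $\Ext^1_\Lambda(X_{i_j},\tilde Q)$ vanishes because $\tilde Q$ is injective; and $\Ext^1_\Lambda(X_{i_j},X_{i_\ell})$ vanishes by Lemma \ref{prelim2}, applied with $X_{i_\ell}\in \mathcal{C}_\Lambda$ and $\pd_\Lambda X_{i_j} = 1$. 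Thus $Y$ is a partial tilting module.

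Next I would invoke the standard fact that any partial tilting $\Lambda$-module has at most $n$ pairwise non-isomorphic indecomposable summands, where $n$ is the number of simple $\Lambda$-modules (a consequence of the Bongartz complement construction). Writing $p$ for the number of non-isomorphic indecomposable summands of $\tilde Q$, and noting that each $X_i$ has $\pd_\Lambda = 1$ and is therefore not isomorphic to any summand of $\tilde Q$, any finite subcollection of the $X_i$ has cardinality at most $n-p$. This forces $|I|\leq n-p$, proving (1). Applying the same Ext-vanishing computation to all of $X := X_1\oplus\cdots\oplus X_s$ at once then gives (2).

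For (3), take any tilting module $T_\mathcal{C}$ in $\mathcal{C}_\Lambda$ and decompose it into indecomposables. A projective indecomposable summand lies in $\mathcal{C}_\Lambda$, hence is projective-injective by Proposition \ref{prelim1}, and so is a summand of $\tilde Q$. A non-projective indecomposable summand has $\pd_\Lambda = 1$ (since $\pd_\Lambda T_\mathcal{C}\leq 1$) and lies in $\mathcal{C}_\Lambda$, so it is isomorphic to some $X_i$. Thus every indecomposable summand of $T_\mathcal{C}$ is a summand of $\tilde Q\oplus X$. But a tilting module has exactly $n$ pairwise non-isomorphic indecomposable summands, while $\tilde Q\oplus X$ has at most $n$; hence equality is forced and $T_\mathcal{C} = \tilde Q\oplus X$. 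Statement (4) is then immediate from (3).

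The argument is essentially combinatorial, so no single step should be the main obstacle. The place where care is required is the quantitative comparison in the last paragraph: one must invoke both the upper bound of $n$ for the indecomposable summands of a partial tilting module and the fact that a tilting module always attains this bound, in order to conclude that the inclusion of summand sets is in fact an equality.
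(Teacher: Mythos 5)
This proposition is imported by citation from \cite[Proposition 1.2.5]{NRTZ}; the paper reproduces the statement but gives no proof of its own, so there is nothing internal to compare your argument against. On its own terms your proof is correct, and in fact it follows the natural (and, as far as I can tell, the original) line of attack: the Ext-vanishing for $\Ext^1_\Lambda(X_{i_j},X_{i_\ell})$ is exactly what Lemma \ref{prelim2} is set up to deliver, the finiteness in (1) comes from the Bongartz bound of $n$ (the number of simples) on the pairwise non-isomorphic indecomposable summands of a partial tilting module, and (3)--(4) follow from the fact that a tilting module attains that bound exactly. Two small points deserve to be made explicit. First, in part (3) you use that every indecomposable direct summand of $T_{\mathcal{C}}$ again lies in $\mathcal{C}_{\Lambda}$; this is true because both $\operatorname{Gen}\tilde{Q}$ and $\operatorname{Cogen}\tilde{Q}$ are closed under direct summands, but it should be said, since Proposition \ref{prelim1} and the identification with the $X_i$ both require membership in $\mathcal{C}_\Lambda$ for the summand itself, not just for $T_{\mathcal{C}}$. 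Second, the literal equality $T_{\mathcal{C}}=\tilde{Q}\oplus X$ holds with the usual convention that tilting modules are taken to be basic (multiplicity-free); what your counting argument actually yields is $\add T_{\mathcal{C}}=\add(\tilde{Q}\oplus X)$, which is what is needed everywhere this proposition is used.
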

Nguyen, Reiten, Todorov, and Zhu in $\cite{NRTZ}$ showed the existence of such a tilting module without any condition on the global dimension of $\Lambda$ and gave a precise description.

\subsection{Auslander Algebras}
We begin with the definition of Auslander algebras.
\begin{mydef}
\label{Auslander}
Let $\Lambda$ be an algebra of finite type and $M_1,M_2,\cdots,M_n$ be a complete set of representatives of the isomorphism classes of indecomposable $\Lambda$-modules.  Then  $A=\text{End}_{\Lambda}(\oplus_{i=1}^nM_i)$ is the $\emph{Auslander algebra}$ of $\Lambda$. 
\end{mydef}
Auslander in $\cite{AU}$ characterized the algebras which arise this way as algebras of global dimension at most $2$ and $\emph{dominant dimension}$ at least $2$.  We now recall the definition of dominant dimension.
\begin{mydef}
\label{def3}
Let $\Lambda$ be an algebra and let
\begin{center}
$0\rightarrow\Lambda_{\Lambda}\rightarrow I_0\rightarrow I_1\rightarrow\ I_2\rightarrow\cdots$
\end{center}
be a minimal injective resolution of $\Lambda$.  Then $\mathop{\text{domdim}}\Lambda=n$ if $I_i$ is projective for $0\leq i\leq n-1$ and $I_n$ is not projective.  If all $I_n$ are projective, we say $\mathop{\text{domdim}}\Lambda=\infty$.
\end{mydef}
When $\mathop{\text{gl.dim}}\Lambda=2$, Crawley-Boevey and Sauter showed the following characterization of Auslander algebras.
\begin{lemma}$\emph{\cite[Lemma~1.1]{CBS}}$
\label{CBS}
If $\mathop{\emph{gl.dim}}\Lambda=2$, then $\mathcal{C}_{\Lambda}$ contains a tilting-cotilting module if and only if $\Lambda$ is an Auslander algebra.
\end{lemma}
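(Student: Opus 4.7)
My plan is to prove each direction using Proposition~\ref{prelim3}, which identifies $T_{\mathcal{C}} = \tilde{Q} \oplus X$ as the unique candidate for a tilting module in $\mathcal{C}_{\Lambda}$, together with the characterization of Auslander algebras as algebras satisfying $\mathop{\text{gl.dim}}\Lambda \leq 2$ and $\mathop{\text{domdim}}\Lambda \geq 2$, and the $\Ext^{1}$-vanishing result in Lemma~\ref{prelim2}.

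For the backward direction, assume $\Lambda$ is an Auslander algebra. I would take $T_{\mathcal{C}} = \tilde{Q} \oplus X$, which is already a partial tilting module, and verify the remaining axioms. Condition $(3)$ of the tilting axioms follows by truncating the minimal injective resolution $0 \to \Lambda \to I_0 \to I_1 \to I_2 \to 0$: since $\mathop{\text{domdim}}\Lambda \geq 2$, both $I_0$ and $I_1$ lie in $\add\tilde{Q}$, so setting $K = \operatorname{im}(I_0 \to I_1)$ yields $0 \to \Lambda \to I_0 \to K \to 0$ with $K \in \mathcal{C}_{\Lambda}$ (as a quotient of $I_0$ and a submodule of $I_1$) and $\pd_{\Lambda}K \leq 1$, forcing $K \in \add T_{\mathcal{C}}$ by Proposition~\ref{prelim3}. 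For the cotilting condition $(1')$, I would establish the sharper fact that every $M \in \mathcal{C}_{\Lambda}$ satisfies $\id_{\Lambda}M \leq 1$: for any $N \in \mathop{\text{mod}}\Lambda$, the first syzygy $\Omega N$ has projective dimension at most $1$, so by a dimension shift
\[
\Ext^2_{\Lambda}(N, M) \cong \Ext^1_{\Lambda}(\Omega N, M),
\]
and the right-hand side vanishes by Lemma~\ref{prelim2}. Condition $(3')$ is handled symmetrically using the minimal projective resolution of $D\Lambda$, whose first two terms are projective-injective by the left-right symmetry of dominant dimension.

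For the forward direction, assume $T$ is tilting-cotilting in $\mathcal{C}_{\Lambda}$; by Proposition~\ref{prelim3}(3) we have $T = \tilde{Q} \oplus X$. Since $\mathop{\text{gl.dim}}\Lambda = 2$ is assumed, it suffices to show $\mathop{\text{domdim}}\Lambda \geq 2$. The tilting exact sequence $0 \to \Lambda \to T_0 \to T_1 \to 0$ with $T_0 \in \add T \subseteq \Cogen\tilde{Q}$ immediately gives $\Lambda \hookrightarrow T_0 \hookrightarrow \tilde{Q}^{m}$; extending the essential injection $\Lambda \hookrightarrow I_0(\Lambda)$ along this embedding shows $I_0(\Lambda)$ is a direct summand of $\tilde{Q}^{m}$ and hence projective-injective. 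To upgrade this to $\mathop{\text{domdim}}\Lambda \geq 2$, I would show $I_0(\Lambda)/\Lambda \in \Cogen\tilde{Q}$, which would force $I_1(\Lambda) \in \add\tilde{Q}$. My approach would combine the tilting sequence with the cotilting sequence $0 \to C' \to C'' \to D\Lambda \to 0$, for instance by forming the pushout of $\Lambda \hookrightarrow T_0$ along $\Lambda \hookrightarrow I_0(\Lambda)$ and exploiting $\id_{\Lambda}T_0 \leq 1$.

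The principal obstacle is this last step. The tilting structure yields the first projective-injective term of the injective resolution of $\Lambda$ almost for free, but the second term resists a direct argument since $I_0(\Lambda)/\Lambda$ naturally appears as a quotient, placing it in $\Gen\tilde{Q}$ rather than the required $\Cogen\tilde{Q}$. An appealing shortcut---that an injective $I \in \Gen\tilde{Q}$ must split off any projective presentation and so lie in $\add\tilde{Q}$---is false, because the kernel of a surjection $\tilde{Q}^{d} \twoheadrightarrow I$ need not itself be injective. Thus the cotilting hypothesis must enter in an essential way, most likely by controlling the injective resolutions of $T_0$ and $T_1$ via $\id_{\Lambda}T \leq 1$ and forcing the second injective term to remain inside $\add\tilde{Q}$.
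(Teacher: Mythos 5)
This lemma is imported verbatim from \cite{CBS} and the paper supplies no proof of its own, so there is no internal argument to compare against; I will judge your attempt on its merits. Your backward direction is correct and essentially complete: truncating the minimal injective resolution of $\Lambda$ at $K=\operatorname{im}(I_0\rightarrow I_1)$ gives the tilting sequence (indeed $K\in\mathcal{C}_{\Lambda}$ and $\pd_{\Lambda}K\leq 1$ because $\Lambda$ and $I_0$ are projective, so every indecomposable summand of $K$ lands in $\add(\tilde{Q}\oplus X)$ by Propositions \ref{prelim1} and \ref{prelim3}); the dimension shift $\Ext^2_{\Lambda}(N,M)\cong\Ext^1_{\Lambda}(\Omega N,M)$ combined with Lemma \ref{prelim2} correctly yields $\id_{\Lambda}M\leq 1$ for all $M\in\mathcal{C}_{\Lambda}$; and the sequence for $D\Lambda$ does follow from the left--right symmetry of dominant dimension.

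The forward direction, however, contains the gap you yourself flag: you establish $\mathop{\text{domdim}}\Lambda\geq 1$ but never prove $I_0(\Lambda)/\Lambda\in\Cogen\tilde{Q}$, which is precisely the content of $\mathop{\text{domdim}}\Lambda\geq 2$, so the proof is incomplete as written. The missing step can be closed, and it vindicates your guess that the cotilting hypothesis is the essential input. Since $T$ is cotilting, the paper's preliminaries identify $\Cogen T=\{M~|~\Ext^1_{\Lambda}(M,T)=0\}$, and $\Cogen T\subseteq\Cogen\tilde{Q}$ because $T\in\Cogen\tilde{Q}$. Let $P$ be an indecomposable projective with injective envelope $E$; you have already shown $E\in\add\tilde{Q}$, so $\Ext^1_{\Lambda}(E,T)=0$ and the long exact sequence attached to $0\rightarrow P\rightarrow E\rightarrow E/P\rightarrow 0$ reduces the vanishing of $\Ext^1_{\Lambda}(E/P,T)$ to the surjectivity of $\Hom_{\Lambda}(E,T)\rightarrow\Hom_{\Lambda}(P,T)$. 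That surjectivity holds: given $f\colon P\rightarrow T$, choose an epimorphism $p\colon Q\rightarrow T$ with $Q\in\add\tilde{Q}$ (possible since $T\in\Gen\tilde{Q}$), lift $f$ through $p$ using projectivity of $P$, and extend the lift along $P\hookrightarrow E$ using injectivity of $Q$. Hence $E/P\in\Cogen T\subseteq\Cogen\tilde{Q}$, its injective envelope lies in $\add\tilde{Q}$, and $\mathop{\text{domdim}}\Lambda\geq 2$ follows; together with $\mathop{\text{gl.dim}}\Lambda=2$ this makes $\Lambda$ an Auslander algebra. Note that your proposed route via the pushout of $\Lambda\hookrightarrow T_0$ and $\Lambda\hookrightarrow I_0(\Lambda)$ only ever exhibits $I_0(\Lambda)/\Lambda$ as a quotient, so it cannot succeed without some version of the extension argument above.
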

Another characterization was given by Li and Zhang in $\cite{LZ}$.  Recall, $\tilde{Q}$ is the direct sum of representatives of the isomorphism classes of all indecomposable projective-injective $\Lambda$-modules.
\begin{theorem}$\emph{\cite[Theorem~4.1]{LZ}}$
\label{char}
Let $\Lambda$ be a finite dimensional $k$-algebra.  Then $\Lambda$ is an Auslander algebra if and only if $\mathop{\emph{gl.dim}}\Lambda\leq2$ and $\add\tilde{Q}=\{I\in\mathop{\emph{mod}}\Lambda~|~\id_{\Lambda}I=0~and~\pd_{\Lambda}\mathop{\emph{soc}}I\leq1\}$.
\end{theorem}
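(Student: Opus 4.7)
My plan is to reduce the biconditional to Auslander's classical characterization: $\Lambda$ is an Auslander algebra if and only if $\mathop{\text{gl.dim}}\Lambda \leq 2$ and $\mathop{\text{domdim}}\Lambda \geq 2$. Both sides of the equivalence include $\mathop{\text{gl.dim}}\Lambda \leq 2$, so the content is to show, under this standing hypothesis, that $\mathop{\text{domdim}}\Lambda \geq 2$ is equivalent to the set equality. Since indecomposable injectives of $\Lambda$ biject with simples via the socle, the set equality is further equivalent to: the set of simple $\Lambda$-modules $S$ with $E(S)$ projective coincides with the set of simples with $\pd_\Lambda S \leq 1$.

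The inclusion $\add\tilde{Q} \subseteq \{I \mid \id_\Lambda I = 0,\ \pd_\Lambda\mathop{\text{soc}}I \leq 1\}$ requires only $\mathop{\text{gl.dim}}\Lambda \leq 2$: for an indecomposable projective-injective $I$, the sequence $0 \to \mathop{\text{soc}}I \to I \to I/\mathop{\text{soc}}I \to 0$ with $I$ projective and $\pd_\Lambda(I/\mathop{\text{soc}}I) \leq 2$ gives $\pd_\Lambda\mathop{\text{soc}}I \leq 1$ by the standard syzygy bound. For the forward direction $(\Rightarrow)$, the reverse inclusion uses $\mathop{\text{domdim}}\Lambda \geq 2$: $I_0(\Lambda)$ is projective-injective, hence $E(S)$ is projective for every simple summand $S$ of a socle of some indecomposable projective. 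To upgrade this to every simple with $\pd_\Lambda S \leq 1$, I would write $\Lambda = \End_\Gamma(\bigoplus_i M_i)$ and establish the socle formula
\[
\mathop{\text{soc}}(P_M) \;\cong\; \bigoplus_{S \hookrightarrow \mathop{\text{soc}}_\Gamma M} S_{P_\Gamma(S)}
\]
for each indecomposable $\Gamma$-module $M$, obtained by identifying elements of $\Hom_\Gamma(\bigoplus_j M_j, M)$ annihilated by $\mathop{\text{rad}}\Lambda$: such elements must vanish on each $M_j$ unless $M_j$ is projective in $\Gamma$, in which case they factor through $M_j/\mathop{\text{rad}}M_j$. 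Taking $M = E_\Gamma(S)$ for a simple $\Gamma$-module $S$ produces an indecomposable projective-injective $\Lambda$-module $P_M$ with socle $S_{P_\Gamma(S)}$; as $S$ varies, these socles exhaust precisely the simples $S_N$ of $\Lambda$ with $N$ projective in $\Gamma$, which are exactly those with $\pd_\Lambda S_N \leq 1$.

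For the converse $(\Leftarrow)$, assume $\mathop{\text{gl.dim}}\Lambda \leq 2$ together with the set equality, and derive $\mathop{\text{domdim}}\Lambda \geq 2$. Projectivity of $I_0$ is immediate from the syzygy bound and the hypothesis applied to each $E(S)$ appearing in the socle of an indecomposable projective. For projectivity of $I_1 = E(I_0/\Lambda)$, I would use that in a minimal injective resolution of $\Lambda$ the multiplicity of a simple $S'$ in $\mathop{\text{soc}}(I_n)$ equals $\dim_k \Ext_\Lambda^n(S',\Lambda)$, which follows from the vanishing of minimal-resolution differentials on socles. The goal becomes showing every simple $S'$ with $\Ext_\Lambda^1(S',\Lambda) \ne 0$ satisfies $\pd_\Lambda S' \leq 1$, whence the hypothesis gives $E(S') \in \add\tilde{Q}$. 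I would extract this by bootstrapping to an Auslander algebra structure on $\Lambda$: set $\Gamma = \End_\Lambda(\tilde{Q})^{\mathrm{op}}$ and use the set equality together with a dimension count to verify $\Gamma$ is of finite type with $\Lambda = \End_\Gamma(\bigoplus_i M_i)$, after which the almost split sequence computation forces $\Ext_\Lambda^1(S_M, \Lambda) = 0$ whenever $M$ is non-projective in $\Gamma$.

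The main obstacle on both sides is the socle identification tying the structure of $\Lambda$ to that of its underlying $\Gamma$: in the forward direction it pins down the socles of indecomposable projective-injectives as precisely the simples of $\pd \leq 1$, while in the converse direction it must also be deployed to construct $\Gamma$ in the first place. Once this identification is in hand the rest reduces to the standard correspondence between socles of terms in minimal injective resolutions and $\Ext$-groups against $\Lambda$.
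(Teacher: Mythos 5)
This statement is imported from Li--Zhang \cite[Theorem~4.1]{LZ}; the paper states it without proof, so your proposal can only be judged on its own terms. Your overall strategy --- reduce to Auslander's criterion ($\mathop{\text{gl.dim}}\Lambda\le2$ and $\mathop{\text{domdim}}\Lambda\ge2$) and translate the set equality into a statement about simples via $S\mapsto E(S)$ --- is sound, and the forward direction is essentially correct: the inclusion $\add\tilde{Q}\subseteq\{\dots\}$ via the syzygy bound works, and your socle formula for $\Hom_\Gamma(\bigoplus_j M_j,M)$ does identify the socles of the indecomposable projective--injectives with precisely the simple $\Lambda$-modules of projective dimension at most $1$, granting the standard facts that $\Hom_\Gamma(\bigoplus_j M_j,E)$ is injective for $E$ injective and that $\pd_\Lambda S_M\le1$ exactly when $M$ is $\Gamma$-projective.

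The converse is where the proposal has a genuine gap. You correctly reduce projectivity of $I_1$ to showing that every simple $S'$ with $\Ext^1_\Lambda(S',\Lambda)\ne0$ satisfies $\pd_\Lambda S'\le1$, but the step you offer for this --- ``set $\Gamma=\End_\Lambda(\tilde{Q})^{\mathrm{op}}$ and use the set equality together with a dimension count to verify $\Gamma$ is of finite type with $\Lambda=\End_\Gamma(\bigoplus_i M_i)$'' --- is not an argument: exhibiting $\Lambda$ as the endomorphism algebra of an additive generator over a representation-finite $\Gamma$ \emph{is} the conclusion of the theorem, so as written the converse is circular, and no actual content is supplied for the ``dimension count.'' The detour is also unnecessary. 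Once $I_0$ is known to be projective (which you do establish), take any simple $S'\subseteq\mathop{\text{soc}}(I_0/\Lambda)$ and pull back along $I_0\to I_0/\Lambda$ to get $0\to\Lambda\to X\to S'\to 0$ with $X\subseteq I_0$. Since $X$ is a submodule of a projective module and $\mathop{\text{gl.dim}}\Lambda\le2$, we get $\pd_\Lambda X\le1$, and then $\pd_\Lambda S'\le\max\{\pd_\Lambda X,\,1\}\le1$ because $\Lambda_\Lambda$ is projective. The hypothesis now forces $E(S')$ to be projective, hence $I_1=E(I_0/\Lambda)$ is projective and $\mathop{\text{domdim}}\Lambda\ge2$. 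Replacing your bootstrap with this elementary pullback closes the converse.
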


We need the following property of Auslander algebras.  Let $\mathcal{P}^1(\Lambda)$ be defined as before while $\mathcal{I}^1(\Lambda)=\{M\in\mathop{\text{mod}}\Lambda~|~\id_{\Lambda}M\leq1\}$.
\begin{prop}
\label{mine}
Let ${\Lambda}$ be an Auslander algebra with $T_{\mathcal{C}}$ the tilting-cotilting module in $\mathcal{C}_{\Lambda}$.  Then $\mathcal{P}^1(\Lambda)=\emph{Cogen}(T_{\mathcal{C}})$ and $\mathcal{I}^1(\Lambda)=\emph{Gen}(T_{\mathcal{C}})$.
\end{prop}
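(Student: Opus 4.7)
The plan is to prove the two equalities in parallel, using three ingredients: $T_{\mathcal{C}}$ is tilting, so $\pd_{\Lambda}T_{\mathcal{C}}\le 1$ and $\Gen(T_{\mathcal{C}})=\{M:\Ext_{\Lambda}^{1}(T_{\mathcal{C}},M)=0\}$; $T_{\mathcal{C}}$ is cotilting, so $\id_{\Lambda}T_{\mathcal{C}}\le 1$ and $\Cogen(T_{\mathcal{C}})=\{M:\Ext_{\Lambda}^{1}(M,T_{\mathcal{C}})=0\}$; and Auslander's characterization gives $\mathop{\text{gl.dim}}\Lambda\le 2$. These three facts should do essentially all the work.

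For $\mathcal{P}^{1}(\Lambda)\subseteq\Cogen(T_{\mathcal{C}})$, I apply Lemma \ref{prelim2} with $X=T_{\mathcal{C}}\in\mathcal{C}_{\Lambda}$: every $M$ with $\pd_{\Lambda}M\le 1$ has $\Ext_{\Lambda}^{1}(M,T_{\mathcal{C}})=0$, which the cotilting characterization translates to $M\in\Cogen(T_{\mathcal{C}})$. For the reverse inclusion, fix a monomorphism $M\hookrightarrow T_{\mathcal{C}}^{d}$ with cokernel $N$ and invoke the standard inequality $\pd_{\Lambda}M\le\max\{\pd_{\Lambda}T_{\mathcal{C}}^{d},\pd_{\Lambda}N-1\}$; since $\pd_{\Lambda}T_{\mathcal{C}}\le 1$ and $\pd_{\Lambda}N\le 2$, this forces $\pd_{\Lambda}M\le 1$.

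For the $\mathcal{I}$-equality, the main substep is to derive a dual of Lemma \ref{prelim2} that the paper does not state explicitly: if $X\in\Cogen\tilde{Q}$ and $\id_{\Lambda}Y\le 1$, then $\Ext_{\Lambda}^{1}(X,Y)=0$. I prove this by embedding $X\hookrightarrow\tilde{Q}^{d}$ with cokernel $C$, applying $\Hom_{\Lambda}(-,Y)$, and using $\Ext_{\Lambda}^{1}(\tilde{Q}^{d},Y)=0$ (projectivity of $\tilde{Q}$) together with $\Ext_{\Lambda}^{2}(C,Y)=0$ (from $\id_{\Lambda}Y\le 1$). Setting $X=T_{\mathcal{C}}\in\mathcal{C}_{\Lambda}\subseteq\Cogen\tilde{Q}$ yields $\Ext_{\Lambda}^{1}(T_{\mathcal{C}},Y)=0$ for every $Y\in\mathcal{I}^{1}(\Lambda)$, so $Y\in\Gen(T_{\mathcal{C}})$ by the tilting characterization. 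The reverse inclusion is dual to the projective-dimension estimate: an epimorphism $T_{\mathcal{C}}^{d}\twoheadrightarrow M$ with kernel $K$ yields $\id_{\Lambda}M\le\max\{\id_{\Lambda}T_{\mathcal{C}}^{d},\id_{\Lambda}K-1\}\le\max\{1,1\}=1$, again leveraging $\mathop{\text{gl.dim}}\Lambda\le 2$.

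The argument is largely mechanical, built on short exact sequences and dimension inequalities. The only step that requires a moment of thought is the dual of Lemma \ref{prelim2}, whose proof relies on the cogenerator half of the definition of $\mathcal{C}_{\Lambda}$ in a way that the stated lemma does not.
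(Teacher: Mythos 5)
Your proof is correct and follows essentially the same route as the paper: identify $\Cogen(T_{\mathcal{C}})$ and $\Gen(T_{\mathcal{C}})$ with the $\Ext$-vanishing classes attached to the cotilting and tilting structures, use Lemma \ref{prelim2} and its dual for one inclusion in each equality, and dimension-shift along a (co)generating short exact sequence using $\mathop{\text{gl.dim}}\Lambda\leq2$ for the other. The only differences are that the paper outsources the $\mathcal{P}^1(\Lambda)=\Cogen(T_{\mathcal{C}})$ half to \cite{ZITO3} and invokes the dual of Lemma \ref{prelim2} without proof, whereas you supply both arguments explicitly, and you phrase the reverse inclusions as direct dimension inequalities rather than the paper's argument by contradiction.
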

\begin{proof}
The proof of $\mathcal{P}^1(\Lambda)=\text{Cogen}(T_{\mathcal{C}})$ is contained in Proposition 2.2 from $\cite{ZITO3}$.  Thus, we will show $\mathcal{I}^1(\Lambda)=\text{Gen}(T_{\mathcal{C}})$, which is similar.  Let $X\in\mathcal{I}^1(\Lambda)$.  If $X$ is injective, then $X\in\text{Gen}(T_{\mathcal{C}})$ by the definition of a tilting module.  If $\id_{\Lambda}X=1$, then the dual statement of Lemma $\ref{prelim2}$ gives $\Ext_{\Lambda}^1(T_{\mathcal{C}},X)=0$.  Since $T_{\mathcal{C}}$ is a tilting module, we must have $X\in\text{Gen}(T_{\mathcal{C}})$.
\par
Next, assume $X\in\text{Gen}(T_{\mathcal{C}})$ but $X\notin\mathcal{I}^1(\Lambda)$.  Since $\Lambda$ is an Auslander algebra, we must have $\id_{\Lambda}X=2$.  Since $X\in\text{Gen}(T_{\mathcal{C}})$, there exists a short exact sequence $0\rightarrow Y \rightarrow T_{\mathcal{C}}'\rightarrow X\rightarrow 0$ in $\mathop{\text{mod}}\Lambda$ with $T_{\mathcal{C}}' \in \add T_{\mathcal{C}}$.  Now, $\id_{\Lambda}T_{\mathcal{C}}'\leq1$ and $\id_{\Lambda}X=2$ imply $\id_{\Lambda}Y=3$, which is a contradiction to $\Lambda$ being an Auslander algebra.  Thus, $X\in\mathcal{I}^1(\Lambda)$.
\end{proof}

\subsection{Miscellaneous Results}
In this subsection, we gather all remaining theorems and propositions needed for our main results.  We begin with the notion of $left~(right)~glued~algebra$, introduced by Assem and Coelho in $\cite{AC}$.  This type of algebra is a finite enlargement in the postprojective (or preinjective) components of a finite set of tilted algebras having complete slices in these components.  For our purposes, we need the following homological characterization proved by Assem and Coelho.
\begin{theorem}$\emph{\cite[Theorem~3.2]{AC}}$
\label{left}
\begin{enumerate}
\item[\emph{(1)}] An algebra $\Lambda$ is left glued if and only if $\id_{\Lambda}M=1$ for almost all non-isomorphic indecomposable $\Lambda$-modules $M$  
\item[\emph{(2)}] An algebra $\Lambda$ is right glued if and only if $\pd_{\Lambda}M=1$ for almost all non-isomorphic indecomposable $\Lambda$-modules $M$.
\end{enumerate}
\end{theorem}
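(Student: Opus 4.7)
My plan is to prove only statement (2); statement (1) then follows by applying the standard $k$-duality $D=\Hom_k(-,k)$, which interchanges $\mathop{\text{mod}}\Lambda$ with $\mathop{\text{mod}}\Lambda^{\mathrm{op}}$, swaps projective with injective dimension, and sends right glued algebras to left glued ones (since preinjective components of $\Lambda$ correspond to postprojective components of $\Lambda^{\mathrm{op}}$). So throughout I fix $\Lambda$ and ask whether $\pd_{\Lambda}M=1$ for almost all indecomposable $M$ if and only if $\Lambda$ is right glued.

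For the necessity direction, I would unpack the definition of right glued: up to a finite list of extra indecomposables, $\Lambda$ is built by gluing in the preinjective components of finitely many tilted algebras $B_1,\ldots,B_r$, each admitting a complete slice $\Sigma_i$ in such a component. Fix one tilted piece $B=\End_H(T_H)$ with $H$ hereditary and $T$ a tilting $H$-module. By the Brenner--Butler tilting theorem, the indecomposable $B$-modules in and to the right of $\Sigma_i$ (in the preinjective direction) are exactly the images under $\Hom_H(T,-)$ of preinjective $H$-modules, and a direct computation with the tilting functor shows each such image has $\pd_B=1$, except for the images of injective $H$-modules, which are projective-injective in $\mathop{\text{mod}}B$. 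Hence each tilted piece contributes only finitely many indecomposables failing $\pd=1$, and the finite enlargement adds only finitely many more, giving the ``almost all'' conclusion.

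For the sufficiency direction, set $\mathcal{P}^1=\mathcal{P}^1(\Lambda)$; by hypothesis its complement in $\mathop{\text{ind}}\Lambda$ is finite. Since $\mathcal{P}^1$ is closed under $\tau_{\Lambda}^{-1}$, almost every $\tau_{\Lambda}^{-1}$-orbit in the Auslander--Reiten quiver lies entirely in $\mathcal{P}^1$, and I would collect the AR components $\mathcal{Q}_1,\ldots,\mathcal{Q}_r$ carrying these orbits. Inside each $\mathcal{Q}_i$ I would produce a complete slice $\Sigma_i$ using the standard Ringel criteria (finite, convex, sincere in the component, meeting each $\tau_{\Lambda}$-orbit exactly once), which is possible because the cofinite $\tau_{\Lambda}^{-1}$-stable part of $\mathcal{Q}_i$ forces a preinjective shape. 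A Bongartz--Ringel argument then identifies $\bigoplus_{M\in\Sigma_i}M$ as a tilting module over its support endomorphism algebra, exhibiting a hereditary algebra $H_i$ and a tilted algebra $B_i=\End_{H_i}(T_i)$ sitting inside $\Lambda$ on the preinjective side; reintroducing the finitely many outlier indecomposables realizes $\Lambda$ as a finite enlargement of the $B_i$ in the sense of \cite{AC}.

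The main obstacle is the sufficiency direction, and specifically two subtleties: first, verifying that a genuine complete slice (not merely a section of a component) can be located inside each $\mathcal{Q}_i$, which requires using sincerity and convexity in the full module category rather than only inside $\mathcal{P}^1$; and second, controlling the global gluing so that the candidate $B_i$ together with the outlier indecomposables assemble into a right glued algebra precisely as defined in \cite{AC}, rather than producing hidden components or overlaps between the $\mathcal{Q}_i$. The cleanest route I see is to invoke Ringel's slice-theoretic characterization of tilted algebras to certify the $B_i$, and then check the gluing condition combinatorially on the finite exceptional set.
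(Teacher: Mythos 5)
The paper offers no proof of this statement: it is quoted directly as Theorem~3.2 of Assem and Coelho \cite{AC}, and the citation is the entire justification given. So there is no internal argument to compare yours against; you are attempting a from-scratch reproof of an external black-box result, and as it stands the attempt has genuine gaps in both directions.

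In the necessity direction you compute projective dimensions for the modules ``in and to the right of'' the slice $\Sigma_i$. But when a complete slice lies in a preinjective component, the successors of the slice form the \emph{finite} exceptional set; the cofinitely many modules you must control are the predecessors, namely the torsion-free class $\Hom_{H}(T,\Gen T)$ of the induced splitting torsion pair (the successors are obtained via $\Ext^1_{H}(T,-)$ from the torsion-free class of $\mathop{\text{mod}}H$, not via $\Hom_H(T,-)$ from preinjectives). Showing that the finite set has $\pd=1$ does not yield the ``almost all'' conclusion; what is needed is a bound $\pd_B\Hom_H(T,M)\leq 1$ for all (or all but finitely many) $M\in\Gen T$, which your sketch never supplies. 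In the sufficiency direction you explicitly defer the two steps that constitute the actual content of the theorem: that the cofinite part of $\mathop{\text{ind}}\Lambda$ lying in $\mathcal{P}^1(\Lambda)$ organizes into components carrying genuine complete slices, and that the resulting tilted algebras assemble into a finite enlargement in the sense of \cite{AC}. Acknowledging an obstacle is not the same as overcoming it, and nothing in the sketch indicates how a complete slice would be produced. Finally, the assertion that $\mathcal{P}^1(\Lambda)$ is closed under $\tau_{\Lambda}^{-1}$ is false in general (a projective module has $\pd=0$ while $\tau_{\Lambda}^{-1}$ of it can have projective dimension $2$); the fact you actually need --- that only finitely many $\tau_{\Lambda}$-orbits meet the finite complement of $\mathcal{P}^1(\Lambda)$ --- follows from finiteness alone. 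Since the paper uses this theorem only as an imported tool, the appropriate course is to cite \cite{AC} rather than reprove it.
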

Next, we wish to compute the global dimension of an algebra.  The following theorem due to Auslander is very useful.
\begin{theorem}$\emph{\cite{AU1}}$
\label{Global}
If $\Lambda$ is an algebra, then 
\[
\mathop{\emph{gl.dim}}\Lambda=1+\emph{max}\{\pd_{\Lambda}(\emph{rad}e\Lambda); e\in\Lambda~is~a~primitive~idempotent\}.
\]
\[
=\emph{max}\{\pd_{\Lambda}S; S~is~a~simple~\Lambda-module\}
\]
\end{theorem}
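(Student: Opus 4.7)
The plan is to prove the two stated equalities in sequence: first establish that $\mathop{\text{gl.dim}}\Lambda$ equals the maximum projective dimension over simple modules, then deduce the formula involving radicals of indecomposable projectives from this.

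For the equality $\mathop{\text{gl.dim}}\Lambda=\max\{\pd_{\Lambda}S\,:\,S\text{ simple}\}$, one direction is immediate since simple modules are modules. For the reverse, set $d=\max\{\pd_{\Lambda}S\,:\,S\text{ simple}\}$ and argue by induction on composition length that $\pd_{\Lambda}M\leq d$ for every $M\in\mathop{\text{mod}}\Lambda$. The base case, $M$ simple, holds by definition of $d$. For the inductive step, choose a submodule $N\subset M$ with $M/N$ simple, which exists because $\Lambda$ is finite dimensional and every $M$ has finite composition length. Applying $\Hom_{\Lambda}(-,X)$ to $0\to N\to M\to M/N\to 0$ yields, for each $i>d$ and every $X\in\mathop{\text{mod}}\Lambda$, an exact sequence
\[
\Ext^{i}_{\Lambda}(M/N,X)\to\Ext^{i}_{\Lambda}(M,X)\to\Ext^{i}_{\Lambda}(N,X),
\]
whose outer terms vanish by the definition of $d$ and the inductive hypothesis, respectively, forcing $\Ext^{i}_{\Lambda}(M,X)=0$ and hence $\pd_{\Lambda}M\leq d$.

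For the first stated formula, recall that every simple $\Lambda$-module is isomorphic to $S_{e}:=e\Lambda/\mathop{\text{rad}}(e\Lambda)$ for some primitive idempotent $e\in\Lambda$. The short exact sequence
\[
0\to\mathop{\text{rad}}(e\Lambda)\to e\Lambda\to S_{e}\to 0,
\]
together with the fact that $e\Lambda$ is projective, yields $\pd_{\Lambda}S_{e}=1+\pd_{\Lambda}(\mathop{\text{rad}}(e\Lambda))$ whenever $\mathop{\text{rad}}(e\Lambda)\neq 0$, and $\pd_{\Lambda}S_{e}=0$ otherwise (in which case $S_e$ is a summand of $e\Lambda$). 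Maximizing over all primitive idempotents and combining with the equality established above produces the first identity.

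No substantial obstacle arises; the heart of the argument is a short induction on composition length combined with one standard short exact sequence. The only delicate point is the boundary case in which $\Lambda$ is semisimple, so that $\mathop{\text{rad}}(e\Lambda)=0$ for every primitive idempotent and $\mathop{\text{gl.dim}}\Lambda=0$. To make the expression $1+\max\{\pd_{\Lambda}(\mathop{\text{rad}}(e\Lambda))\}$ valid there, one adopts the convention $\pd_{\Lambda}(0)=-1$ or, equivalently, restricts the maximum to those primitive idempotents $e$ with $\mathop{\text{rad}}(e\Lambda)\neq 0$.
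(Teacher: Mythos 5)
Your proof is correct; the paper itself gives no argument for this statement, quoting it as a classical result of Auslander, and your induction on composition length plus the short exact sequence $0\to\mathop{\text{rad}}(e\Lambda)\to e\Lambda\to S_e\to 0$ is exactly the standard proof found in the literature. The one point worth keeping explicit is the remark you already make about the semisimple case (where $\mathop{\text{rad}}(e\Lambda)=0$), and, if one is being fussy, the reduction of global dimension to finitely generated modules, which is automatic for Artin algebras.
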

A module is said to be $\it{torsionless}$ provided it can be embedded into a projective module.  A module is said to be $\it{co}$-$\it{torsionless}$ provided it is a factor module of an injective module.  An algebra $\Lambda$ is $\it{torsionless}$-$\it{finite}$ provided there are only finitely many isomorphism classes of indecomposable torsionless $\Lambda$-modules.  Given such an algebra $\Lambda$, we have the following combinatorial relationship between the torsionless and co-torsionless modules.
\begin{prop}$\emph{\cite[Corollary~5]{R}}$
\label{torsion}
If $\Lambda$ is torsionless-finite, the number of isomorphism classes of indecomposable factor modules of injective modules is equal to the number of isomorphism classes of indecomposable torsionless modules.
\end{prop}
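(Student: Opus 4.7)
The result is classical (Corollary 5 in $\cite{R}$). My plan is to reduce it, via the standard $k$-duality $D=\Hom_k(-,k)$, to a left-right symmetry statement about torsionless modules, and then to construct the bijection explicitly.

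First, since $D$ is a contravariant equivalence $\mathop{\text{mod}}\Lambda\to\mathop{\text{mod}}\Lambda^{op}$ that sends injective $\Lambda$-modules to projective $\Lambda^{op}$-modules, it takes indecomposable factor modules of injective $\Lambda$-modules bijectively to indecomposable submodules of projective $\Lambda^{op}$-modules, that is, to indecomposable torsionless $\Lambda^{op}$-modules. The claim is thus equivalent to the assertion that $\Lambda$ is torsionless-finite if and only if $\Lambda^{op}$ is, and that the two counts agree. Since every indecomposable projective module is torsionless and the number of indecomposable projective modules is the same (namely $n$, the number of simples) on each side, it suffices to biject the indecomposable non-projective torsionless $\Lambda$-modules with the indecomposable non-projective torsionless $\Lambda^{op}$-modules.

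To build this bijection, I would take, for each indecomposable non-projective torsionless $M$, a minimal left $\add\Lambda$-approximation (which is an injection by torsionlessness), giving a short exact sequence $0\to M\to P_M\to C_M\to 0$, and then invoke the Auslander-Bridger transpose $\mathrm{Tr}$ together with the standard four-term exact sequence
\[
0\to \Ext_{\Lambda^{op}}^1(\mathrm{Tr}\,M,\Lambda)\to M\to M^{**}\to\Ext_{\Lambda^{op}}^2(\mathrm{Tr}\,M,\Lambda)\to 0
\]
in order to extract a canonical indecomposable summand of the Auslander-Bridger dual of $C_M$, which one would then identify as an indecomposable torsionless $\Lambda^{op}$-module. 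The torsionless-finite hypothesis ensures that the assignment is invertible by running the same procedure on $\Lambda^{op}$.

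The main obstacle is that $\mathrm{Tr}$ does \emph{not} preserve torsionlessness directly: for indecomposable non-projective $M$, ``$M$ torsionless'' is the vanishing $\Ext_{\Lambda^{op}}^1(\mathrm{Tr}\,M,\Lambda)=0$, while ``$\mathrm{Tr}\,M$ torsionless'' is $\Ext_\Lambda^1(M,\Lambda)=0$, and these are distinct conditions. The bijection must therefore be routed through the cokernel $C_M$ of the minimal approximation, and the delicate point is verifying both that the assignment is well-defined on isomorphism classes and that it actually lands inside the indecomposable torsionless $\Lambda^{op}$-modules. This is exactly where Ringel's careful analysis of the subcategory $\add U$ (with $U$ a representation generator for the torsionless modules) becomes essential, and is the step I expect to require the most work.
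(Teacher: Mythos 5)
The paper does not actually prove this proposition; it is imported verbatim from Ringel \cite{R} as Corollary~5, so there is no internal argument to compare yours against. On its own terms, your opening reduction is correct and standard: $D$ carries injective $\Lambda$-modules to projective $\Lambda^{op}$-modules, so the indecomposable factor modules of injectives over $\Lambda$ correspond bijectively to the indecomposable torsionless $\Lambda^{op}$-modules, and since both algebras have the same number of indecomposable projectives, the proposition is equivalent to the left--right symmetry of the count of indecomposable non-projective torsionless modules (together with the fact that finiteness on one side forces finiteness on the other).

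That symmetry, however, is the entire content of Ringel's result, and your proposal does not establish it. The assignment you describe --- ``extract a canonical indecomposable summand of the Auslander--Bridger dual of $C_M$'' --- is never defined: you do not say which summand, why it is torsionless over $\Lambda^{op}$, why the choice is independent of the approximation, or why the dual procedure inverts it, and you yourself flag this as the step requiring the most work. The four-term sequence you quote only recharacterizes torsionlessness of $M$ as the vanishing of $\Ext^1_{\Lambda^{op}}(\mathrm{Tr}\,M,\Lambda)$; it does not by itself manufacture a torsionless $\Lambda^{op}$-module from $M$. The kind of construction that does work is explicit: for a minimal projective presentation $P_1\xrightarrow{p}P_0\to M\to 0$ of an indecomposable non-projective torsionless $M$, left exactness of $\Hom_\Lambda(-,\Lambda)$ gives $0\to M^{*}\to P_0^{*}\xrightarrow{p^{*}}P_1^{*}$, and the image of $p^{*}$ is a submodule of the projective $\Lambda^{op}$-module $P_1^{*}$, hence torsionless; one must then prove indecomposability and bijectivity of this assignment using minimality. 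None of that verification appears in your sketch, so the proof has a genuine gap exactly at its central step.
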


\section{Main Results}
We begin with a new characterization of Auslander algebras.  Let $\Lambda$ be an algebra.  Recall, 
$\mathcal{P}^1(\Lambda)=\{M\in\mathop{\text{mod}}\Lambda~|~\pd_{\Lambda}M\leq1\}$.

\begin{theorem}
$\Lambda$ is an Auslander algebra if and only if there exists a hereditary torsion pair $(\mathcal{T},\mathcal{F})$ in $\mathop{\emph{mod}}\Lambda$ such that $\mathcal{P}^1(\Lambda)=\mathcal{F}$ and, if $I$ is any indecomposable injective $\Lambda$-module, then $\pd_{\Lambda}I\neq1$.
\end{theorem}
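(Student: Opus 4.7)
The plan is to prove the two directions separately, using the cotilting description of $\mathcal{P}^1(\Lambda)$ from Proposition \ref{mine} and the Li--Zhang characterization (Theorem \ref{char}) for the forward direction, and elementary closure arguments for the reverse direction.

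For the forward direction, if $\Lambda$ is Auslander with tilting-cotilting module $T_{\mathcal{C}}$, then Proposition \ref{mine} identifies $\mathcal{P}^1(\Lambda)$ with $\Cogen(T_{\mathcal{C}})$, which is precisely the torsion-free part of the cotilting torsion pair $(\mathcal{T}(T_{\mathcal{C}}), \Cogen(T_{\mathcal{C}}))$, giving the candidate torsion pair with $\mathcal{F} = \mathcal{P}^1(\Lambda)$ immediately. To verify $\pd_\Lambda I \neq 1$ for each indecomposable injective $I$, observe that if $I$ is projective the claim is trivial, and otherwise Theorem \ref{char} forces $\pd_\Lambda \operatorname{soc} I = 2$; applying $\Hom_\Lambda(-, N)$ to $0 \to \operatorname{soc} I \to I \to I / \operatorname{soc} I \to 0$ and using that $\Ext^3_\Lambda$ vanishes (since $\mathop{\text{gl.dim}}\Lambda \leq 2$) yields a surjection $\Ext^2_\Lambda(I, N) \twoheadrightarrow \Ext^2_\Lambda(\operatorname{soc} I, N)$, forcing $\pd_\Lambda I = 2$. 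For the hereditary property, take $M \in \mathcal{P}^1(\Lambda)$ and decompose $E(M)$ as a direct sum of injective envelopes $E(S_i)$ of the simple summands $S_i$ of $\operatorname{soc} M$. Each $S_i$ lies in $\mathcal{P}^1(\Lambda)$ by closure under submodules (immediate from $\mathop{\text{gl.dim}}\Lambda \leq 2$), so Theorem \ref{char} places $E(S_i) \in \add \tilde{Q}$, and in particular each $E(S_i)$ is projective, so $E(M) \in \mathcal{P}^1(\Lambda)$.

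For the reverse direction, I would derive both $\mathop{\text{gl.dim}}\Lambda \leq 2$ and $\mathop{\text{domdim}}\Lambda \geq 2$ from the hypothesis. For the global dimension bound, apply submodule closure of the torsion-free class $\mathcal{F} = \mathcal{P}^1(\Lambda)$ to a projective cover $0 \to K \to P \to S \to 0$ of a simple $S$: $P \in \mathcal{P}^1(\Lambda)$ forces $K \in \mathcal{P}^1(\Lambda)$, hence $\pd_\Lambda S \leq 2$, and Theorem \ref{Global} gives $\mathop{\text{gl.dim}}\Lambda \leq 2$. For the dominant dimension bound, $\Lambda \in \mathcal{P}^1(\Lambda)$, so the hereditary property places $I_0 = E(\Lambda) \in \mathcal{P}^1(\Lambda)$; each indecomposable summand is an injective of projective dimension at most one, so the hypothesis $\pd_\Lambda I \neq 1$ forces each such summand to be projective. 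Running the same argument on $I_0 / \Lambda$, which lies in $\mathcal{P}^1(\Lambda)$ by the short exact sequence $0 \to \Lambda \to I_0 \to I_0 / \Lambda \to 0$, shows $I_1 = E(I_0 / \Lambda)$ is projective, whence $\mathop{\text{domdim}}\Lambda \geq 2$ and $\Lambda$ is Auslander.

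The principal obstacle is the hereditary closure step in the forward direction: the bound $\pd_\Lambda E(S) \leq 2$ coming from $\mathop{\text{gl.dim}}\Lambda \leq 2$ alone does not suffice, and one must upgrade it to $\pd_\Lambda E(S) = 0$ using Theorem \ref{char}. This upgrade is precisely why the two conditions in the statement, the hereditary torsion pair and the exclusion of indecomposable injectives of projective dimension one, must appear together in order to characterize the Auslander property.
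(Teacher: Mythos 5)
Your proof is correct, and while it shares the paper's skeleton---the torsion pair is the cotilting pair $(\mathcal{T}(T_{\mathcal{C}}),\Cogen(T_{\mathcal{C}}))$ identified with $\mathcal{P}^1(\Lambda)$ via Proposition \ref{mine}, and the global dimension bound in the converse comes from submodule-closure of $\mathcal{F}$ plus Theorem \ref{Global}---it takes a genuinely different route at three points. For hereditariness, the paper simply notes that $T_{\mathcal{C}}\in\mathcal{C}_{\Lambda}$ forces $\Cogen(T_{\mathcal{C}})\subseteq\Cogen(\tilde{Q})$, so injective envelopes of its members split off powers of the injective module $\tilde{Q}$; your detour through $\operatorname{soc}M$, submodule-closure of $\mathcal{P}^1(\Lambda)$ (valid since $\mathop{\text{gl.dim}}\Lambda\leq2$), and Theorem \ref{char} reaches the same place. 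For $\pd_{\Lambda}I\neq1$, the paper observes that an injective $I$ with $\pd_{\Lambda}I=1$ would lie in $\Cogen(T_{\mathcal{C}})$, hence split into $\add T_{\mathcal{C}}\subseteq\mathcal{C}_{\Lambda}$, and Proposition \ref{prelim1} then makes it projective, a contradiction; your $\Ext^2$-surjection argument from $0\rightarrow\operatorname{soc}I\rightarrow I\rightarrow I/\operatorname{soc}I\rightarrow0$ is a valid replacement. Most substantively, your converse finishes by exhibiting $\mathop{\text{domdim}}\Lambda\geq2$ directly (pushing $\Lambda$ and then $I_0/\Lambda$ through the hereditary closure and the $\pd_{\Lambda}I\neq1$ hypothesis to make $I_0$ and $I_1$ projective) and invoking Auslander's original characterization, whereas the paper instead verifies the Li--Zhang criterion of Theorem \ref{char}, namely that every indecomposable injective whose socle has projective dimension at most one is projective. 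Your version is more self-contained, needing only the classical definition of an Auslander algebra, at the cost of tracking two terms of the minimal injective copresentation; the paper's is shorter once Theorem \ref{char} is available.
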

\begin{proof}
Assume $\Lambda$ is an Auslander algebra.  By Lemma $\ref{CBS}$, there exists a tilting-cotilting $\Lambda$-module $T_{\mathcal{C}}$ such that $T_{\mathcal{C}}\in\mathcal{C}_{\Lambda}$.  Since $T_{\mathcal{C}}$ is a cotilting $\Lambda$-module, it induces a torsion pair, $(\mathcal{T}(T_{\mathcal{C}}),\mathcal{F}(T_{\mathcal{C}}))$, such that $\mathcal{F}(T_{\mathcal{C}})=\text{Cogen}(T_{\mathcal{C}})$.  By the definition of $\mathcal{C}_{\Lambda}$, we see $\mathcal{F}(T_{\mathcal{C}})$ is closed under injective envelopes and Definition $\ref{Her}$ gives $(\mathcal{T}(T_{\mathcal{C}}),\mathcal{F}(T_{\mathcal{C}}))$ is a hereditary torsion pair.  Proposition $\ref{mine}$ gives us $\mathcal{P}^1(\Lambda)=\text{Cogen}(T_{\mathcal{C}})=\mathcal{F}(T)$.  Finally, let $I$ be an indecomposable injective $\Lambda$-module and suppose $\pd_{\Lambda}I=1$.  Then $I\in\mathcal{F}(T_{\mathcal{C}})$.  Since $\mathcal{F}(T_{\mathcal{C}})=\text{Cogen}(T_{\mathcal{C}})$ and $I$ is injective, we must have $I\in\add T_{\mathcal{C}}$.  By Proposition $\ref{prelim1}$, $I$ must be projective and this contradicts $\pd_{\Lambda}I=1$.  We conclude $\pd_{\Lambda}I\neq1$.
\par
Now, assume $\Lambda$ is an algebra such that there exists a hereditary torsion pair $(\mathcal{T},\mathcal{F})$ in $\mathop{\text{mod}}\Lambda$ with $\mathcal{P}^1(\Lambda)=\mathcal{F}$ and, if $I$ is any indecomposable injective $\Lambda$-module, then $\pd_{\Lambda}I\neq1$.  By assumption, every indecomposable projective module $P\in\mathcal{F}$.  Since $\mathcal{F}$ is a torsion-free class, this further implies $\mathop{\text{rad}}P\in\mathcal{F}$.  By Theorem $\ref{Global}$,  $\mathop{\text{gl.dim}}\Lambda\leq2$.  Let $I$ be an indecomposable injective module and consider $\mathop{\text{soc}}I$.  If $\pd_{\Lambda}\mathop{\text{soc}}I\leq1$, then $\mathop{\text{soc}}I\in\mathcal{F}$.  Since $\mathcal{F}$ is closed under injective envelopes, $I\in\mathcal{F}$.  Our assumption $\pd_{\Lambda}I\neq1$ implies $I$ is projective.  Using Theorem $\ref{char}$, we conclude $\Lambda$ is an Auslander algebra. 
\end{proof}
Our next result concerns any Auslander algebra $\Lambda$ which is left or right glued.  We show this is the case only when $\Lambda$ is representation-finte.
\begin{theorem}
Let $\Lambda$ be an Auslander algebra.  Then $\Lambda$ is left or right glued if and only if $\Lambda$ is representation-finite.
\end{theorem}
\begin{proof}
Obviously, if $\Lambda$ is representation-finite, then $\Lambda$ is left and right glued.  We may assume $\Lambda$ is left glued with the case $\Lambda$ being right glued similar.  Since $\Lambda$ is left glued, Theorem $\ref{left}$ gives $\id_{\Lambda}M=1$ for almost all non-isomorphic indecomposable $\Lambda$-modules $M$.  This implies the set of all indecomposable modules $M$ such that $\id_{\Lambda}M=2$ and $\pd_{\Lambda}M\leq2$ is finite.  Now consider the set of all indecomposable modules $M$ such that $\id_{\Lambda}M=1$ and $\pd_{\Lambda}M=1$.  Any such module $M$ must belong to $\mathcal{C}_{\Lambda}$ by Proposition $\ref{mine}$ and this set must be finite by Proposition $\ref{prelim3}$.  Thus, we have shown that $\mathcal{P}^1(\Lambda)=\{M\in\mathop{\text{mod}}\Lambda~|~\pd_{\Lambda}M\leq1\}$ is a finite set.  Since $\mathcal{P}^1(\Lambda)=\text{Cogen}(T_{\mathcal{C}})$, again by Proposition $\ref{mine}$, we have shown $\Lambda$ is torsionless-finite.  Combining Proposition $\ref{torsion}$ with yet another application of Proposition $\ref{mine}$ finally gives $\Lambda$ is representation-finite.
\end{proof}
Our final main result involves tilting modules possessing a certain property which we now introduce and define.
\begin{mydef}
\label{MINE}
Let $\Lambda$ be an algebra with $T$ a tilting $\Lambda$-module.  Then $T$ possesses the $\it{hereditary}$ $\it{property}$ if, for every module $M\in\Gen{T}$, there exists a short exact sequence
$0\rightarrow T'\rightarrow T''\rightarrow M\rightarrow 0$ in $\mathop{\text{mod}}\Lambda$ with $T'$ and $T''$ $\in \add T$

\end{mydef}
Dually, one can define the $\it{co}$-$\it{hereditary}$ $\it{property}$ for a cotilting module $C$ and any module $M\in\Cogen C$.  We show that the module category of an Auslander algebra $\Lambda$ contains a tilting module with the hereditary property.  
\begin{theorem}
Let $\Lambda$ be an Auslander algebra.  Then there exists a tilting $\Lambda$-module with the hereditary property.
\end{theorem}
\begin{proof}
Consider the tilting module $T_{\mathcal{C}}$ guaranteed by Lemma $\ref{CBS}$.  We will show $T_{\mathcal{C}}$ possesses the hereditary property.  Proposition $\ref{need}$ gives, for every $M\in\Gen T_{\mathcal{C}}$, a short exact sequence $0\rightarrow L \rightarrow T_{\mathcal{C}}'\rightarrow M\rightarrow 0$ with $T_{\mathcal{C}}'\in\add T$ and $L\in\Gen(T_{\mathcal{C}})$.  Clearly, $L\in\Cogen(T_{\mathcal{C}})$ and Proposition $\ref{mine}$ shows $\pd_{\Lambda}L=1$.  Thus, we have a module $L$ such that $\pd_{\Lambda}L=1$ and $L\in\mathcal{C}_{\Lambda}$.  Let $L'$ be any indecomposable summand of $L$.  Applying Proposition $\ref{prelim3}$, we see $L'\in\add T_{\mathcal{C}}$.  Since $L'$ was arbitrary, we conclude $L\in\add T_{\mathcal{C}}$ and $T_{\mathcal{C}}$ possesses the hereditary property.    
\end{proof}
We note that a similar argument can be applied, since $T_{\mathcal{C}}$ is a cotilting module, to show $T_{\mathcal{C}}$ possesses the co-hereditary property.  Next, we justify the naming of this property by proving a characterization of hereditary algebras.
\begin{theorem}
\label{Main3}
Let $\Lambda$ be an algebra.  Then $\Lambda$ is hereditary if and only if every tilting $\Lambda$-module possesses the hereditary property.
\end{theorem}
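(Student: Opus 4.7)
My plan is to handle the two directions separately, with the backward direction being essentially immediate and the forward direction following from a short long-exact-sequence chase together with Propositions \ref{Tilting} and \ref{need}.

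For the ``if'' direction, the natural move is to test the hereditary property on the cheapest available tilting module, namely the regular module $T=\Lambda$. It is trivially a tilting $\Lambda$-module (with the identity sequence $0\to\Lambda\to\Lambda\to0\to0$ witnessing condition (3)), and $\Gen\Lambda=\mathop{\text{mod}}\Lambda$. So the hereditary property hands us, for every $M\in\mathop{\text{mod}}\Lambda$, a short exact sequence $0\to T'\to T''\to M\to 0$ with $T',T''\in\add\Lambda$, i.e.\ both projective. This is a projective resolution of length at most one, so $\pd_{\Lambda}M\leq 1$ for all $M$, and therefore $\Lambda$ is hereditary.

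For the ``only if'' direction, assume $\Lambda$ is hereditary and let $T$ be an arbitrary tilting $\Lambda$-module, with induced torsion pair $(\mathcal{T}(T),\mathcal{F}(T))=(\Gen T,\mathcal{F}(T))$. Fix $M\in\Gen T$. Proposition \ref{need} provides a short exact sequence
\[
0\rightarrow L\rightarrow T_0\rightarrow M\rightarrow 0
\]
with $T_0\in\add T$ and $L\in\mathcal{T}(T)=\Gen T$. The job is to upgrade $L\in\Gen T$ to $L\in\add T$. By Proposition \ref{Tilting}, it suffices to show $L$ is $\Ext$-projective in $\mathcal{T}(T)$, i.e.\ $\Ext_{\Lambda}^1(L,N)=0$ for every $N\in\Gen T$.

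To see this, fix $N\in\Gen T$ and apply $\Hom_{\Lambda}(-,N)$ to the above sequence. The relevant piece of the long exact sequence is
\[
\Ext_{\Lambda}^1(T_0,N)\rightarrow \Ext_{\Lambda}^1(L,N)\rightarrow \Ext_{\Lambda}^2(M,N).
\]
The left-hand term vanishes because $T_0\in\add T$ and $N\in\Gen T=\mathcal{T}(T)$, and the right-hand term vanishes because $\Lambda$ is hereditary. Hence $\Ext_{\Lambda}^1(L,N)=0$ for all $N\in\Gen T$, and Proposition \ref{Tilting} gives $L\in\add T$, so $T$ possesses the hereditary property. I do not see any serious obstacle here; the key observation to pin down is the combination of $\Ext$-projectivity in $\mathcal{T}(T)$ with the fact that hereditariness kills $\Ext^2$, which is exactly what forces the left-hand syzygy $L$ back into $\add T$.
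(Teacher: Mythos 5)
Your proposal is correct and follows essentially the same route as the paper: the backward direction tests the hereditary property on $\Lambda_{\Lambda}$ to get $\pd_{\Lambda}M\leq1$ for all $M$, and the forward direction uses Proposition \ref{need} to produce $0\to L\to T_0\to M\to 0$ and then the long exact sequence together with Proposition \ref{Tilting} and the vanishing of $\Ext^2$ to conclude $L\in\add T$. Your version is slightly more careful in quantifying over $N\in\Gen T$ where the paper abbreviates, but the argument is the same.
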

\begin{proof}
Assume every tilting $\Lambda$-module possesses the hereditary property.  Consider $\Lambda_{\Lambda}$ as a right $\Lambda$-module.  Clearly, $\Lambda_{\Lambda}$ is a tilting $\Lambda$-module and, for every $\Lambda$-module $M$, $M\in\Gen(\Lambda_{\Lambda})$.
 We have a short exact sequence $0\rightarrow (\Lambda_{\Lambda})' \rightarrow (\Lambda_{\Lambda})''\rightarrow M\rightarrow 0$ where $(\Lambda_{\Lambda})'$ and $(\Lambda_{\Lambda})''\in\add\Lambda_{\Lambda}$.  This implies $\pd_{\Lambda}M\leq1$.  Since $M$ was arbitrary, we conclude $\mathop{\text{gl.dim}}\Lambda\leq1$ and $\Lambda$ is a hereditary algebra.
\par
Assume $\Lambda$ is hereditary.  Let $T$ be a tilting $\Lambda$-module with $M\in\Gen T$.  Proposition $\ref{need}$ gives $0\rightarrow L \rightarrow T_0\rightarrow M\rightarrow 0$ with $T_0\in\add T$ and $L\in\Gen T$.  Applying $\Hom_{\Lambda}(-,\Gen T)$ gives the exact sequence $\Ext_{\Lambda}^1(T_0,\Gen T)\rightarrow\Ext_{\Lambda}^1(L,\Gen T)\rightarrow\Ext_{\Lambda}^2(M,\Gen T)$.  Proposition $\ref{Tilting}$ shows $\Ext_{\Lambda}^1(T_0,\Gen T)=0$ and $\Lambda$ being hereditary forces $\Ext_{\Lambda}^2(M, \Gen T)=0$.  This implies $\Ext_{\Lambda}^1(L,\Gen T)=0$ and another application of Proposition $\ref{Tilting}$ gives $L\in\add T$.  We conclude $T$ possesses the hereditary property.  Since $T$ was an arbitrary tilting $\Lambda$-module, we are done.
\end{proof}
The proof provides a sufficient condition for a titling module to possess the hereditary property.  Here, $\Lambda$ is an arbitrary algebra.
\begin{cor}
Let $\Lambda$ be an algebra and $T$ a tilting $\Lambda$-module.  Suppose $\id_{\Lambda}M\leq1$ for every $M\in\Gen T$.  Then $T$ possesess the hereditary property.
\end{cor}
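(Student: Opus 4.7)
The plan is to extract exactly the same argument used in the second half of the proof of Theorem \ref{Main3}, replacing the global use of hereditariness by the localized hypothesis that every module in $\Gen T$ has injective dimension at most one. The point is that in that proof, hereditariness was only invoked in one place, namely to kill the group $\Ext_{\Lambda}^2(M,\Gen T)$, and the present hypothesis provides the same vanishing.

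Concretely, I would start with an arbitrary $M\in\Gen T$ and apply Proposition \ref{need} to obtain a short exact sequence $0\rightarrow L\rightarrow T_0\rightarrow M\rightarrow 0$ with $T_0\in\add T$ and $L\in\Gen T$. For each $N\in\Gen T$ I would then apply $\Hom_{\Lambda}(-,N)$ to produce the usual piece of the long exact sequence
\[
\Ext_{\Lambda}^1(T_0,N)\rightarrow\Ext_{\Lambda}^1(L,N)\rightarrow\Ext_{\Lambda}^2(M,N).
\]
By Proposition \ref{Tilting}, the leftmost term is zero since $T_0\in\add T$ is Ext-projective in $\mathcal{T}(T)=\Gen T$. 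The rightmost term vanishes because, by hypothesis, $\id_{\Lambda}N\leq 1$, so that $\Ext_{\Lambda}^{i}(-,N)=0$ for all $i\geq 2$. Consequently $\Ext_{\Lambda}^1(L,N)=0$ for every $N\in\Gen T$, which says $L$ is itself Ext-projective in $\Gen T$, and a second appeal to Proposition \ref{Tilting} yields $L\in\add T$. The sequence above then witnesses the hereditary property for $M$, and since $M\in\Gen T$ was arbitrary, $T$ possesses the hereditary property.

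There is no real obstacle here: the only thing to check is that the hypothesis $\id_{\Lambda}M\leq 1$ for all $M\in\Gen T$ is enough to replace the global hypothesis $\mathop{\text{gl.dim}}\Lambda\leq 1$ in the proof of Theorem \ref{Main3}, and indeed it is, because the hereditariness of $\Lambda$ was used there solely to ensure the vanishing of $\Ext_{\Lambda}^2(M,\Gen T)$.
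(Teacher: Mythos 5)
Your proposal is correct and is essentially identical to the paper's own proof: both run the argument of Theorem \ref{Main3} with the vanishing of $\Ext_{\Lambda}^2(M,N)$ now supplied by the hypothesis $\id_{\Lambda}N\leq1$ for $N\in\Gen T$ rather than by hereditariness. Your version is slightly more explicit about why Proposition \ref{Tilting} applies (Ext-projectivity of $L$ in $\Gen T=\mathcal{T}(T)$), but the substance is the same.
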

\begin{proof}
Following the proof of Theorem $\ref{Main3}$, we have the following exact sequence $\Ext_{\Lambda}^1(T_0,\Gen T)\rightarrow\Ext_{\Lambda}^1(L,\Gen T)\rightarrow\Ext_{\Lambda}^2(M,\Gen T)$.  Once again, Proposition $\ref{Tilting}$ shows $\Ext_{\Lambda}^1(T_0,\Gen T)=0$ and our assumption forces $\Ext_{\Lambda}^2(M, \Gen T)=0$.  Proposition $\ref{Tilting}$ gives $L\in\add T$ and we conclude $T$ possesses the hereditary property.
\end{proof}
Given an algebra $\Lambda$,  a tilting $\Lambda$-module $T$ is $\it{separating}$ if the induced torsion pair, $(\mathcal{T}(T),\mathcal{F}(T))$, is splitting.
\begin{prop}
Let $\Lambda$ be an algebra.  Suppose there exists a separating tilting $\Lambda$-module $T$ that possesses the hereditary property.  Then $\mathop{\emph{gl.dim}}\Lambda\leq2$.
\end{prop}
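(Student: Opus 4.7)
The plan is first to invoke Theorem~\ref{Global} and reduce the problem to showing $\pd_\Lambda S\leq 2$ for every simple $\Lambda$-module $S$. Because $T$ is separating, Proposition~\ref{split} implies that every indecomposable $\Lambda$-module, and in particular every simple $S$, lies either in $\Gen T$ or in $\mathcal{F}(T)$, so the argument splits into these two cases.

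In the first case, $S\in\Gen T$, applying the hereditary property to $S$ produces a short exact sequence $0\to T'\to T''\to S\to 0$ with $T',T''\in\add T$. Since $\pd_\Lambda T'\leq 1$ and $\pd_\Lambda T''\leq 1$, the long exact sequence for $\Ext_\Lambda(-,N)$ applied to an arbitrary $N$ yields $\Ext_\Lambda^3(S,N)=0$, and hence $\pd_\Lambda S\leq 2$.

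The second case, $S\in\mathcal{F}(T)$, is the main obstacle. A first sub-step is to show that the projective cover $P(S)$ also lies in $\mathcal{F}(T)$: otherwise any surjection $T^d\twoheadrightarrow P(S)$ would induce an inclusion $\Hom_\Lambda(P(S),S)\hookrightarrow\Hom_\Lambda(T^d,S)=0$, contradicting the nonzero canonical surjection $P(S)\twoheadrightarrow S$. By the separating hypothesis $P(S)\in\mathcal{F}(T)$, and since $\mathcal{F}(T)$ is closed under submodules, the first syzygy $\Omega S=\ker(P(S)\twoheadrightarrow S)$ lies in $\mathcal{F}(T)$ as well. To conclude $\pd_\Lambda S\leq 2$ I would then embed $\Omega S\hookrightarrow(\tau_\Lambda T)^d$ using $\mathcal{F}(T)=\Cogen(\tau_\Lambda T)$ and analyze the cokernel via the separating decomposition, combining the hereditary property on its $\Gen T$-part with a bound on $\pd_\Lambda \tau_\Lambda T$ derived from the minimal projective presentations of the indecomposable summands of $T$. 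Verifying that this analysis actually produces $\pd_\Lambda\Omega S\leq 1$ is the crux of the proof.
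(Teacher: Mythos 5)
Your reduction to simple modules and your first case ($S\in\Gen T$) are exactly what the paper does. The problem is the second case, and you have flagged it yourself: the step ``verifying that this analysis actually produces $\pd_{\Lambda}\Omega S\leq1$'' is not a detail to be checked but the entire content of that case, and the route you sketch does not obviously close. To extract $\pd_{\Lambda}\Omega S\leq1$ from an embedding $\Omega S\hookrightarrow(\tau_{\Lambda}T)^d$ you would need, at a minimum, control on $\pd_{\Lambda}\tau_{\Lambda}T$, and there is no a priori bound on the projective dimension of $\tau_{\Lambda}T$ for a tilting module over an arbitrary algebra; the cokernel of that embedding also need not split compatibly with the torsion pair. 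So as written the second case is a genuine gap, not merely an unverified computation. (Your preliminary sub-steps --- $P(S)\in\mathcal{F}(T)$ and hence $\Omega S\in\mathcal{F}(T)$ --- are correct but end up being unnecessary.)

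The paper closes the second case with a much shorter argument that you should compare against: if $S\in\mathcal{F}(T)=\Cogen(\tau_{\Lambda}T)$, then since $T$ is separating, Proposition~\ref{split} gives $\tau_{\Lambda}S\in\mathcal{F}(T)$ as well. Now invoke the standard Auslander--Reiten criterion that $\pd_{\Lambda}S\leq1$ if and only if $\Hom_{\Lambda}(D\Lambda,\tau_{\Lambda}S)=0$. Since $D\Lambda\in\Gen T=\mathcal{T}(T)$ by the definition of a tilting module, and $\tau_{\Lambda}S$ lies in the torsion-free class, this $\Hom$ space vanishes and one gets the stronger conclusion $\pd_{\Lambda}S\leq1$ in this case --- no syzygy analysis needed. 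The lesson is that the splitting hypothesis should be applied to $\tau_{\Lambda}S$ directly, in combination with the $\Hom(D\Lambda,\tau_{\Lambda}-)$ test for projective dimension at most one, rather than used to decompose auxiliary cokernels.
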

\begin{proof}
Let $S$ be a simple $\Lambda$-module.  Suppose $S\in\Gen T$.  By assumption, there exists 
$0\rightarrow T'\rightarrow T''\rightarrow S\rightarrow 0$ in $\mathop{\text{mod}}\Lambda$ with $T'$ and $T''$ $\in \add T$.  Since $\pd_{\Lambda}T\leq1$, we must have $\pd_{\Lambda}S\leq2$.  Next, assume $S\in\Cogen(\tau_{\Lambda}T)$.  Since $T$ is separating, we know from Proposition $\ref{split}$ that $\tau_{\Lambda}S\in\Cogen(\tau_{\Lambda}T)$.  It is well know $\pd_{\Lambda}S\leq1$ if and only if $\Hom_{\Lambda}(D\Lambda,\tau_{\Lambda}S)=0$.  By the definition of a tilting module, $D\Lambda_{\Lambda}\in\Gen T$.  Since $D\Lambda_{\Lambda}\in\Gen T$ and $\tau_{\Lambda}S\in\Cogen(\tau_{\Lambda}T)$, we must have $\Hom_{\Lambda}(D\Lambda,\tau_{\Lambda}S)=0$.  Thus, $\pd_{\Lambda}S\leq1$.  By Theorem $\ref{Global}$, we conclude $\mathop{\text{gl.dim}}\Lambda\leq2$.

\end{proof}

\noindent Mathematics Faculty, University of Connecticut-Waterbury, Waterbury, CT 06702, USA
\it{E-mail address}: \bf{stephen.zito@uconn.edu}

\end{document}